\begin{document}

\oddsidemargin=-0.1cm
\topmargin=-5mm

\newtheorem{lemma}{Lemma}[section]
\newtheorem{remark}{Remark}[section]
\newtheorem{theorem}{Theorem}[section]
\newtheorem{statement}{Section}[section]
\newtheorem{definition}{Definition}[section]
\newtheorem{proposition}{Предложение}[section]
\newtheorem{colorrary}{Colorrary}[section]
\newtheorem*{theorem*}{Threorem}
\newtheorem*{theorem*1}{Theorem \normalfont{on polyhedra}}
\newtheorem*{corollary}{Следствие}
\newtheorem*{theorem*2}{Theorem on index}
\newtheorem*{conj}{Conjecture}
\def\proof{\par\noindent{\bf Proof}. \ignorespaces}
\def\endproof{{\ \vbox{\hrule\hbox{%
   \vrule height1.3ex\hskip1.3ex\vrule}\hrule
  }}\vspace{2mm}\par}

\def\proofff{\par\noindent{\bf Proof} \ignorespaces}
\def\endproofff{{\ \vbox{\hrule\hbox{%
   \vrule height1.3ex\hskip1.3ex\vrule}\hrule
  }}\vspace{2mm}\par}

\date{   }
\title{\bf   Local Theory of $t$-bonded Sets
}
\author{M.~Bouniaev, N.~Dolbilin\\
mikhail.bouniaev@utrgv.edu, dolbilin@mi.ras.ru}


\maketitle

\renewcommand{\abstractname}{Abstract}
\renewcommand{\refname}{References}

\begin{abstract}
The local theory for regular  and multi-regular systems was
developed in the assumption  that these systems  are Delone sets,
or $(r,R)$-systems. The requirement for a set  to be a
$(r,R)$-system particularly implies that any two points in a
Delone  set can be connected by a sequence of points from the set
with sequel inter-point distances bounded by $2R$. In the
terminology we adopted  in this paper, it means that a Delone set
is a $2R$-bonded set. Meanwhile, there are crystals, e.g.
zeolites, whose atomic structure is  multi-regular  microporous
point set. In these structures there are  cavities that are
relatively large compared to the "length" of bonds  between
atomes. In other words,  the parameter  $R$ in this Delone set
significantly exceeds a natural link parameter. For a  better
description of such "microporous" structures it is worthwhile to
take  into consideration  a parameter that represents  atomic
bonds within the matter.    In the paper we generalize some
results of the local theory to the sets that  we called $t$-bonded
sets even without making an assumption that a set is a Delone set.

\end{abstract}

\section{Introduction}
In this section we  present basic definitions related to the mathematical
concept of crystal in the light of the local theory with the overarching goal to extend the
theory’s results by re-introducing the concept of t-bonded sets  [Definition 2.1]
and considering the class of sets that includes the Delone sets as a subclass.

The concept of t-bonded sets was briefly introduced by one of the
authors  in [1] under the name of $d$-connected sets (in Russian),
though it has not received  due consideration. In light of the
developments in the local theory for crystals that occurred since
1976 and  demands  in chemistry and crystallography, we believe
the local theory for $t$-bonded sets deserves to be developed.

The above mentioned definitions single  the family of crystals out
of the  family of more general sets, which  fulfils  the
requirements  for  point sets  to be uniformly not very close to
each other (see the  $r$-condition below), and relatively dense
(the  $R$-condition below). Sets with these conditions were
introduced and studied by B. Delone who called them $(r,
R)$-systems ([2],[3]).

\smallbreak \textbf{Definition 1.1} (Delone Set). Let
$\mathbb{R}^d$ be an Euclidean space and $r$ and $R$  some
positive numbers. A set $X\subset \mathbb{R}^d$,  is called a
\emph{Delone set} with parameters $r$ and $R$ (or $(r,R)$-system)
 if
\newline (i)    ($r$-condition):  any   open ball  $B(r)$  of radius $r$  has  at most  one point from $X$, and
\newline (ii)   ($R$-condition): any closed ball $B(R)$ of radius R has at least one point from  $X$.

\smallbreak \textbf {Remark 1.1}. The definition of a Delone set
requires the existence of numbers $r$ and $R$ with specified
properties. However, for the sake of shortening the theorems’
statements and proofs we included these two parameters into the
definition of a Delone set as a characteristic of the set in the
assumption that they exist. Even more, we chose  $r$ as the
supremum of all numbers  such that  set X  satisfies
{r}-condition, and $R$ as the infimum of the set of all numbers
that satisfy $R$-condition.

\smallbreak \textbf{Definition 1.2} (Regular System). A Delone set
$X$ is called a \emph{regular system} if for any two points $x$
and $y$ from $X$ there is a symmetry $s$ of $X$ such that $s(x) =
y$, i.e. if the symmetry group $Sym(X)$ acts transitively on $X$.

\smallbreak \textbf{Remark 1.2}. It follows immediately from
definition 1.2 that a regular point set $X$ is an orbit  $G\cdot
x$, where $x$ is a point from $X$, and $G$ is, generally speaking,
a subgroup of $Sym(X)$. We remind that $G$-orbit of $x$ is the set
$G\cdot x ={g(x)|g \in G}$.

\smallbreak Let $Iso(d)$ be the complete group of
all isometries of $\mathbb{R}^d$.

\smallbreak \textbf{Definition 1.3} (Discrete Group). A group
$G\subset Iso(d) $ is called a \emph{discrete subgroup}, if the
orbit $G\cdot x$ of any point $x\in\mathbb{R}^d$ is a discrete
subset of $\mathbb{R}^d$.

\smallbreak \textbf{Definition 1.4} (Fundamental Domain). Let $G$
be  a discrete subgroup of $Iso(d)$. We call the closed domain
$F(G)$ in  $\mathbb{R}^d$ a \emph{fundamental domain} of group $G$
if:
\newline(i)  for any point $x$ from $\mathbb{R}^d$, the
intersection of $F(G)$ and the orbit $G\cdot x$ is not empty;
\newline (ii) for any point $x$ from $\mathbb{R}^d$, the
interior of $F(G)$ contains  at  most one  point from $G\cdot x$.

\smallbreak \textbf {Remark 1.3.} For a discrete group $G$ a
fundamental domain does exist.  It suffices to take an orbit
$G\cdot x$ of a non-fixed point $x$ with respect to $G$  and
construct the Voronoi tessellation for the $G\cdot x$. The Voronoi
domain is a fundamental domain of the group. A fundamental domain
can be chosen in a non-unique way, sometimes it can be unbounded.

 \smallbreak \textbf{Definition 1.5.} [Crystallographic Group]. Let $Iso(d)$ be the complete group of
all isometries of  Euclidean $d$-space $\mathbb{R}^d$ . A subgroup
$G$ of the group $Iso(d)$ is called \emph{crystallographic} if any orbit
$G\cdot x$ is a discrete set, and the fundamental domain of $G$ is
compact.

Important  results for crystallographic groups were obtained in
[4],[5].

\smallbreak \textbf {Statement 1.1.} \emph{A Delone set $X$ is a
regular system  if and only if there is a crystallographic
group $G$ such that $X$ is a $G$-orbit of some point $x$.}

\smallbreak E.S. Fedorov defined  crystal as a finite union of
regular point sets [6].

\smallbreak \textbf{Definition 1.6.} [ Crystal]. We say that a  subset $X$ of
$\mathbb{R}^d$ is a \emph{crystal} if $X$ is the $G$-orbit of a
finite set $X_0=\{x_1,...,x_k\}$,
 i.e. $X=\cup_{i=1}^k G\cdot x_i$.

 \smallbreak Thus a crystal can be regarded as  a union of  orbits of several points  with respect to the same
 crystallographic group $G$.

\smallbreak
The main goal of the local theory for crystals is to develop a sound mathematical
theory and methodology that would serve as a model of crystalline structure and
formation from the pairwise identity of local arrangements around each atom.
However, before 1970s, there were neither formal  statements that used mathematical language and concepts,
 nor  rigorously proven results in this regard until B. Delone and R. Galiulin formulated the  problem,
 and Delone’s students N. Dolbilin, M. Stogrin,  and others (see for instance, [7]-[10])
 developed a mathematically sound  local theory of crystals.

\smallbreak We would like to point out
that Delone sets have the following property that plays a significant role in most proofs of the local theory of crystals.

\smallbreak \textbf {Statement 1.2.} \emph{For a Delone set $X$ and for any two points $x$ and $x'\in X$
there is a finite sequence of points from
$X$ $x=x_0,x_1,\ldots ,x_m=x'$ such that $|x_{i-1}x_i|\leq 2R$,
$i\in [1,m]$.}

\smallbreak We call such  sequence a $2R$-\emph{chain} and
denote it as $[x,...,x']$. We  call each closed interval
$[x_{i-1},x_i]$ a \emph{link} of the $2R$-\emph{chain}.

\smallbreak Following the terminology of Definition 2.1,  that
the next section starts with,  we can say that any Delone set is a
$2R$-bonded set.

\smallbreak For Delone sets we can even make a stronger statement than  statement 1.2.
In fact, the following statement that is proved in the next section  holds
true. For any Delone $(r,R)$-set $X$ there is such $\varepsilon = \varepsilon(r,R)>0$ that for
any two points $x$ and $x'\in X$ there is $(2R-\varepsilon)$-\emph{chain}  $[x, ...,x']$ [Statement 2.1].

\smallbreak
For some Delone sets the  value of $\varepsilon$ is very
small, and therefore the length of links is bounded from above by
an upper bound close  to $2R$. However, there are   many
crystalline structures,  e.g. for zeolites, such  that they are
presented as  $(r,R)$-sets, and  any two points of
the structure could be connected by a chain with the links which
are significantly smaller than $2R$, i.e. the parameter $t$ [Definition 2.1]  is
significantly smaller than $2R$.

 \smallbreak We can also note that in the proofs of theorems in  the local
theory that  use $2R$-linkage of Delone sets, the length of links in a chain that connects the  given two
points of  $X\subset \mathbb{R}^d$  is not essential,
but what is essential is that  any
two points of $X$ could be connected by a chain  with the links'
length not greater than the fixed number $t$, ($t$ depends on the
set $X$). The size of a local region that determines global
properties of the set $X$ also depends on the value of $t$. The
lesser the value of $t$,  the smaller region could be considered. In
this respect,  the value of $t=2R$ in many structures seems to be
unnecessary too large, though for some $X$ the value of $t$ can be
very close to $2R$.

 \smallbreak On the other hand,  in the assumption that $X$ is a Delone $(r,R)$-set, if the parameter $t$ is
significantly  smaller than $2R$, local conditions expressed in of $R$ happen to be not very efficient.
 We believe that local conditions that determine global properties of a set
could be found in terms of $t$, the parameter that shows the lengths of
chains' links that any two points of the set could be connected with.

 \smallbreak All these observations inspired us
to develop the  local theory for the  $t$-bonded sets. In this theory we
 do not assume (unless it is stated as a premise),
 that the  set $X$ under consideration is a Delone
set, and therefore these sets do not possess some properties that
were used in developing the local theory for
Delone sets.

\smallbreak
\section {Definitions and Main Results}
As we have already mentioned, in this  paper, unless stated differently,
we consider subsets (that we denote $X$,$Y$,$Z$, ...etc.) of $d$-space  $\mathbb{R}^d$
that are  uniformly discrete point sets, i.e. $X$ is a set such that there exists $r>0$ such that any ball
$B(r)\subset \mathbb{R}^d$ contains at most one point from $X$
(condition (i) in definition 1.1  of a Delone set). Thus $X$ fulfils
just the $r$-condition and, generally speaking,   it is not a Delone
set. However, in this  paper we reserve letter  $r$ for
the parameter in the $r$-condition and letter R for the parameter
for  the  $R$-condition respectfully in the definition of a
Delone $(r,R)$-set. Like  for a Delone set  we'll choose  $r$ as the
supremum of all numbers  such that the set X  satisfies the
{r}-condition, and $R$ as the infimum of the set of all numbers
that satisfy the $R$-condition.

\smallbreak \textbf{Definition 2.1.} [$t$-bonded  Set]. A set
$X\subset \mathbb{R}^d$ is said to be a \emph{$t$-bonded set}  in
$\mathbb{R}^d$, or just  \emph{$t$-bonded set}  in $\mathbb{R}^d$,
where $t$ is some positive number if:
\\ (1) \emph{aff} $X =\mathbb{R}^d$, where \emph{aff}$X$ stands for affine hull of $X$
\\ (2) For any two points $x$ and $x'\in X$ there is a finite sequence of points from
$X$ $x=x_0,x_1,\ldots ,x_m=x'$ such that $|x_{i-1}x_i|\leq t$,
$i\in [1,m]$. We will call the sequence a $t$-\emph{chain} and
denote it as $[x,...,x']$. Each closed interval $[x_{i-1},x_i]$ will be called
a \emph{link} of the $t$-\emph{chain}.

\smallbreak \textbf{Statement 2.1.} \emph{For any Delone
$(r,R)$-set $X$ there is such $\delta = \delta(r,R)>0$ that for
any two points $x$ and $x'\in X$ there is $t$-\emph{chain}  $[x,
...,x']$ with all links  that are no longer  than $2R-\delta$, i.e
any Delone set is a $t$-bonded set, where the parameter $t$ can be chosen
less than $2R$.}

\begin{proof} First of all, we'll show  that  due to the $R$-property of a Delone
set $X$ for any two points $x$ and $x'\in X$ there is a $2R$-chain
$[x,\ldots , x']$. Let  $B_z(R)$ denote a ball with radius $R$ and
the center $z$ such that $z\in [xx']$ and $|xz|=R$. The ball
$B_z(R)$ contains, generally speaking, several  points $y,y_1,
\ldots$ from $X$ different from $x$. Each of them fulfils
$0<|xy|\leq 2R$ and $|yx'|<|xx'|$. If the point
$x'$ is among these points $y,y_1,\ldots$,  the required chain is already complete. If among  points
$y,y_1,\ldots$ there is no $x'$, each of them can be chosen as the
first point $x_1$ of the $2R$-chain $[x,x_1,\ldots, x']$ being constructed.
Let us  take  a ball $B_{z_2}(R)$ with the radius $R$
and the center $z_2\in [x_1,x']$. In $B_{z_2}(R)$ there are again
finitely many points such that each of them can be chosen as the
second point of $2R$-chain $[x,x_1,x_2,\ldots ,
x']$ being constructed. Applying this argument again and again we come to the
$2R$-chain $[x,x_1,x_2,\ldots, x']$.

Proving the  existence of a $(2R-\delta)$-chain
will require additional arguments. First, it is easy to show that if on a circle with a radius no
greater than $R$ there are at least three points
$y_1,y_2,y_3,\ldots $ such that for all the  points $|y_iy_j|\geq
2r$, then any two of these  points  can be connected by a
$t$-chain of the points $y_1,y_2,y_3,...$, where
$$t\leq\sqrt{4R^2-r^2}=2R(1-\sqrt{1-(r/2R)^2}).$$

Now we construct a
   $t$-chain $[x,\ldots ,x']$ which connects $x$ to $x'\in X$. We start with a
 'small' ball $B_{z}(\rho)$, where the center $z\in [xx']$ and $|xz|=\rho$, i.e. $x$ is on the boundary of the ball.
If $\rho<r$ the ball $B_z(\rho)$  contains  only one point from
$X$. It is the  point $x$. Now we  keep  shifting the
ball's center $z$ from  the point $x$  along the segment $[xx']$.
At the same time we keep increasing the radius $\rho$ of the  ball
so that the boundary of the  inflated ball passes through the point
$x$. Due to the $R$ and $r$-conditions of a Delone set at some
 point in time the inflated ball $B_z(\rho)$ 'catches up' a new point $x_1\in
 X$. It is obvious that the radius of the ball does not exceed
 $R$.

 We can assume that there are no other points  from $X$ on the  boundary $\partial B_z(\rho)$.
  Therefore  we can continue to inflate the
 ball and at the same time keep points $x$ and $x_1$
 on the ball's  boundary,  until the ball is constructed  with at least three points from X on its  boundary.
 Since there are no points from $X$ inside the ball $B_z(\rho)$, its
 radius does not exceed $R$.
 Furthermore, any three points on a sphere
 are necessarily non-collinear. They have to be on a circle
 which is a plane section of the boundary sphere of the ball.
 In other words, we have at least  three points $x, x_1$ and
 $x_1\in X$ on a circle of the radius not greater than $R$.
 Due to the above mentioned remark on three points on a circle
 in $X$ there is a $t$-chain that connects $x$ to $x_1$. This chain
 $x,...,x_1$  can be chosen as the starting  fragment of the
  $[x,\ldots, x']$ to be constructed. We emphasize that this
 fragment is already not monotonic. Indeed, though $|x'x_1|<|x'x|$,
 other intermediate points of the fragment can be further from
 $x'$ than points $x$ and $x_1$. If the point $x_1$ differs from
 $x'$,  we can apply the same argument and construct a new fragment
  $[x_1,\ldots, x_2]$ of the  $t$-chain, where $|x_2x'|<|x_1,x'|$.
Continuing the process  we will construct the $t$-chain $[x\ldots, x']$.
\end{proof}

 \smallbreak
It is clear that $t\geq 2r$. For some Delone sets the value of $t$
can be chosen significantly less than $2R$. For instance, if
$X=\mathbb{Z}^d$ is the  cubic lattice, then $t$ can be chosen as
the edge length of the cube and $2R=t\sqrt{d}$.

\smallbreak
In the local theory the concept of cluster  plays a significant
role and there could be different approaches to this concept.
In this paper we consider a version of the cluster
which  was mainly used in the local theory for Delone sets.
 We note that because the concept of  cluster  we adopt
 in this paper is the same for both Delone sets and t-sets,
 the concepts of clusters' equivalence and cluster's group  of symmetries $S_x(\rho)$ are also the same.

 \smallbreak
 \textbf{Definition 2.2. } [Cluster]. Let $\rho > 0$, a $\rho$-\emph{cluster}
$C_x(\rho)$ centered at point $x\in X$ be defined as a set of all
points $x'\in X$ such that $|xx'|\leq \rho$, i.e.
$$C_x(\rho) = X\cap B_x(\rho)$$

 \smallbreak
 Local conditions for a set to be  regular  are normally
expressed in terms of $R$. However,   in  case of $t$-bonded sets, since
we do  not require the $R$-condition, there are some interesting
properties of a set that could not be expressed in terms of $R$.
Some properties of a set fail to be true if we try to replace
$R$ with other parameters that seem to play a similar role.
For example, it is natural that parameter $t$ for a
$t$-bonded set has a role similar to that of the parameter $2R$ in a
Delone set.  However, there is an important difference between
$2R$-clusters in  case of Delone sets and $t$-clusters in  case
of $t$-bonded sets. As is known,  in a  Delone set any  $2R$-cluster has
its affine hull of  full dimension $d$. In a $t$-bonded set $X\subset
\mathbb{R}^d$ which is not a Delone set, an analogous statement
that the dimension of the affine hull of the $t$-cluster (rank of
a $t$-cluster) is also equal to $d$ fails to be true. Moreover,
there are Delone sets $X$, and even regular systems, in which
parameter $t$ is significantly smaller than $2R$ and the affine hulls
of all clusters are two-dimensional, though  Delone sets are
full-dimensional. We'll start  section 3 with an example
of a t-bonded set that is a regular system, therefore a Delone
set, though the affine hull of a t-cluster in not equal to $d$. In the
same section we will introduce some conditions that guarantee that
a given cluster   has rank $d$, i.e. the dimension of the affine hull
of the cluster equals $d$  [Theorem 3.1].

The same conditions will guarantee that the rank of a cluster would not increase with the growth of its radius. This fact will play an important role
for the proofs  in the local theory for $t$-bonded sets.

 \smallbreak
\textbf{Statement 2.2.}  \emph{Given $t$-bonded set $X$, $\rho >0$, let
$X\setminus C_x(\rho)\neq \emptyset$. Then there is a  point
$x'\in X$ such that
  $x'\in C_x(\rho +t)\setminus C_x(\rho)$, and it is linked
  to the center $x$ by a $t$-chain  contained in $C_x(\rho +t)$.}

 \smallbreak
It should be noted  that in the  spherical layer $B_x(\rho
+t)\setminus B_x(\rho )$, generally speaking,  there could also be
points $x''\in X$, that are linked to $x$ just by a 'long'
$t$-chain starting at the center $x$ of the cluster  $C_x(\rho + t)$, leaving it,
and then coming back to the cluster $C_x(\rho +t)$ to get eventually connected to $x''$.

The  concept of  cluster was used to develop the local theory
for crystals that by definition are Delone sets. Here we would
like  to mention that in  case of $t$-bonded sets,  two more,
different from the traditional,  concepts of cluster might
play  an interesting role - combinatorial clusters and "mixed"
clusters. However,  development  of the local theory of crystals
that uses a different concept of clusters requires a separate
discussion that goes well beyond the goals of this paper.

\smallbreak In   section 4 we study the structure of  cluster's
group symmetries that plays a pivotal role in the local theory of
t-sets (and crystals). At the end of the section the $t$-extension
theorem is proved,  that gives sufficient conditions to extend $
\rho_0$-cluster isometry to the $(\rho_0+t)$-cluster isometry.

\smallbreak \textbf{Definition 2.4. } [Cluster Equivalency]. Given
$t$-bonded set $X$ in $\mathbb{R}^d$, $\rho > 0$, two points $x\in
X$ and $x'\in X$, we say that the $\rho$-cluster $C_x(\rho)$ is
\emph{equivalent} to  the $\rho$-cluster $C_{x'}(\rho)$, if there
is a space isometry $g$ of $\mathbb{R}^d$, such that $g(x)=x'$ and
$g(C_x(\rho)) = C_{x'}(\rho)$.

\smallbreak
In section 5 and 6  we prove two theorems [Theorem 5.1 and Theorem 6.1]
for t-sets that are similar to the Criterion for Regular (Delone)
Systems (see, e.g. [7], [11], [12], [13]) and Criterion for Crystals (see, e.g. [13], [14], [15]). Though the
statements of the theorems are almost identical for both Delone
sets and t-sets the  main challenge of the  proofs is related to
the rank of the clusters, which as we have already mentioned is $d$
for 2R-clusters in Delone sets, however, in case of t-sets it may not
be equal to  $d$ for  $\rho$-clusters when  $\rho$ is equal to
$t$. The  cluster's rank naturally affects the
structure of the group $S_x(\rho)$ of the cluster's symmetries.
 The statements of both theorems, as well as their proofs
depend on the concept of cluster  counting function that we define below.

 \smallbreak It is clear that with a given $\rho > 0$, the relation of clusters to be equivalent as  defined above,  is
 an equivalence relation   on a set of all $\rho$-clusters in X.   Therefore, the set
 of all $\rho$-clusters in X could be presented as a disjoint union of equivalence classes.

 \smallbreak
 \textbf{Definition 2.5.} [Cluster Counting Function] For a given
$t$-bonded set $X$ in $\mathbb{R}^d$ and  $\rho > 0$,  the \emph{cluster
counting function} $N(\rho)$ is defined as the cardinal number of
the set of equivalence classes of $\rho$-clusters in $X$ provided
the cardinal number of equivalence clusters is finite.

 \smallbreak
\textbf{Definition 2.6.}  [Set of Finite Type]. Set $X$  is said
to be \emph{of  finite type} if  the cardinal number  $N(\rho)$ is
finite for any $\rho>0$.

 \smallbreak
\textbf{Statement 2.3.} \emph{For a set $X$ of finite type the
cluster counting function $N(\rho)$ is defined and finite for any
$\rho \geq 0$; it is a positive, piecewise constant,  integer
valued, monotonically non-decreasing, and continuous from the left
function.}

\smallbreak Statement 2.3 is true for both Delone sets and t-sets. In  section 7 we  discuss
t-sets of finite type,  and we  show how different Delone sets and t-sets are, as far as the property
"to be a set of  finite type" is concerned.

\smallbreak
In  section 7 we also included the proof of  an anecdotal fact [Statement 7.1]  that in
case of Delone sets there exists a local condition
of  a Delone set $X$ to be a set of finite type. If for a Delone
set $X$ the counting function $N(\rho)$ is finite for $\rho=2R$,
then it is finite for all $\rho>0$. In case of $t$-bonded sets the
situation is quite different. We will prove the following theorem.

\smallbreak \textbf{Theorem  7.1.} \emph{Given $t>0$, for an
arbitrarily large  $k>0$ there is a $t$-bonded set $X\subset
\mathbb{R}^2$ such that $N(kt)<\infty$, but  for any
$\varepsilon>0$ $N(kt+\varepsilon)=\infty$.}

An example of such $t$-bonded set will be presented  in the proof of theorem 7.1.

\smallbreak
\section{ The Rank of a Cluster}
In this section we will discuss the \emph{rank} of a cluster, i.e.
the dimension of the affine hull  of a cluster. As we have already
stated in the previous section, given a point set  $X\subset
\mathbb{R}^d$, there is an important difference between
$2R$-clusters in  case of Delone sets and $t$-clusters in  case of
$t$-bonded sets. As is well-known  for  Delone sets,  a $2R$-cluster
for any point $x\in  X$ has the rank equal to $d$. Though  $2R$
for Delone sets  and $t$ for $t$-bonded sets play  similar roles in local
theories for Delone sets and $t$-bonded sets respectfully, as is shown
below,  an analogous to the previous statement for $t$-bonded sets fails
to be true.

\smallbreak \textbf{Statement 3.1.} \emph{There are Delone sets
$X$ of rank 3 and  even regular systems  in which the parameter
$t$ is significantly smaller that $2R$,  and all $t$-clusters have
rank 2, although  Delone sets have rank 3.}

\smallbreak \textbf{Example.} Let $\Lambda\subset \mathbb{R}^3$ be
a lattice of rank 3 constructed on the orthonormal basis,  and
$X:=\Lambda\setminus (2\Lambda\cup (2\Lambda +(1,1,1)))$. Then $X$
is a Delone set with parameters $r=1/2$ and $R=1$. Since all
$2R$-clusters in $X$ are centrally symmetrical and pairwise
equivalent,  then $X$ is a regular system ([15], [16],[17]). On
the other hand, $X$ is a $t$-bonded set where $t=1<2=2R$. Each
$t$-cluster $C_x(1)$ is a cross of rank 2. However,
the set $X$ is a regular system of rank 3.

\smallbreak
Though the situation with clusters' ranks in case of Delone sets and $t$-bonded sets is different,
 the  following statements (lemma 3.1, theorems 3.1 and 3.2)
 on ranks of $\rho$-clusters  hold true. To shorten the notation,  we'll use $d_x(\rho):=dim (\emph{aff}C_x(\rho))$
for the rank of  the cluster $C_x(\rho))$.

In all discussions below  $\Pi^n$ stands for the $n$-dimensional plane that is the affine hull of a $t$-bonded set $X$, $n\leq d$.

\smallbreak
\begin{lemma} Let $X\subset \mathbb{R}^d $ be  a $t$-bonded set, $\rho$  a positive real number, and   $x$,  $x'$ two  points from $X$ such that $|xx'|\leq t$,
 and  the following conditions hold true,
$$
d_x(\rho )= d_x(\rho+t)\, \hbox{ and }\,  d_{x'}(\rho )=
d_{x'}(\rho+t) \eqno(3.1)
$$

Then aff$C_{x'}(\rho)$=aff$C_{x'}(\rho+t)$=aff$C_x(\rho+t)$=aff$C_x(\rho+t)$.
\end{lemma}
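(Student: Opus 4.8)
The plan is to reduce the statement to two elementary ingredients: monotonicity of the affine hull under inclusion together with the triangle inequality, and the standard fact that a flat contained in a flat of the same dimension must coincide with it. No use of discreteness or of the full $t$-bonded property is needed beyond $|xx'|\le t$.

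First I would record the two ``cross-inclusions'' between the clusters. Since $|xx'|\le t$, for any $y\in C_{x'}(\rho)$ the triangle inequality gives $|xy|\le|xx'|+|x'y|\le t+\rho$, so $C_{x'}(\rho)\subseteq C_x(\rho+t)$; symmetrically $C_x(\rho)\subseteq C_{x'}(\rho+t)$. Passing to affine hulls (which is monotone under inclusion) yields
$$\mathrm{aff}\,C_{x'}(\rho)\subseteq \mathrm{aff}\,C_x(\rho+t),\qquad \mathrm{aff}\,C_x(\rho)\subseteq \mathrm{aff}\,C_{x'}(\rho+t).$$

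Next I would bring in hypothesis (3.1). The trivial inclusion $C_x(\rho)\subseteq C_x(\rho+t)$ gives $\mathrm{aff}\,C_x(\rho)\subseteq\mathrm{aff}\,C_x(\rho+t)$, and since $d_x(\rho)=d_x(\rho+t)$ these two flats have equal dimension; a flat contained in a flat of equal dimension must equal it, so $\mathrm{aff}\,C_x(\rho)=\mathrm{aff}\,C_x(\rho+t)$. The identical argument applied at $x'$, using $d_{x'}(\rho)=d_{x'}(\rho+t)$, gives $\mathrm{aff}\,C_{x'}(\rho)=\mathrm{aff}\,C_{x'}(\rho+t)$. Finally I would chain these collapses with the cross-inclusions: substituting $\mathrm{aff}\,C_x(\rho+t)=\mathrm{aff}\,C_x(\rho)$ into the first inclusion gives $\mathrm{aff}\,C_{x'}(\rho)\subseteq\mathrm{aff}\,C_x(\rho)$, and substituting $\mathrm{aff}\,C_{x'}(\rho+t)=\mathrm{aff}\,C_{x'}(\rho)$ into the second gives $\mathrm{aff}\,C_x(\rho)\subseteq\mathrm{aff}\,C_{x'}(\rho)$. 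The two opposite inclusions force $\mathrm{aff}\,C_x(\rho)=\mathrm{aff}\,C_{x'}(\rho)$, and together with the two collapses this shows that all four affine hulls coincide.

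There is no genuine obstacle here: the whole argument rests on the triangle inequality, monotonicity of $\mathrm{aff}$, and the flat-equality fact. The only step worth stating explicitly is that elementary fact (the direction subspaces of the nested flats are linear subspaces of equal dimension, hence equal, and the flats share the point $x$, respectively $x'$); I would state it once and reuse it for both $x$ and $x'$. I would also point out that the concluding display in the statement appears to contain a typo: its last term $\mathrm{aff}\,C_x(\rho+t)$ should read $\mathrm{aff}\,C_x(\rho)$, since otherwise $\mathrm{aff}\,C_x(\rho)$ never appears, whereas the argument in fact proves the fourfold equality $\mathrm{aff}\,C_{x'}(\rho)=\mathrm{aff}\,C_{x'}(\rho+t)=\mathrm{aff}\,C_x(\rho+t)=\mathrm{aff}\,C_x(\rho)$.
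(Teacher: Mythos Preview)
Your proof is correct and follows essentially the same approach as the paper: both use the triangle inequality to obtain the cross-inclusions $C_x(\rho)\subseteq C_{x'}(\rho+t)$ and $C_{x'}(\rho)\subseteq C_x(\rho+t)$, collapse $\mathrm{aff}\,C_x(\rho)=\mathrm{aff}\,C_x(\rho+t)$ (and similarly at $x'$) via the equal-dimension hypothesis, and then chain the resulting inclusions to force all four hulls to coincide. Your observation about the typo in the displayed conclusion is also correct; the paper's own statement and proof repeat $\mathrm{aff}\,C_x(\rho+t)$ where $\mathrm{aff}\,C_x(\rho)$ is clearly intended.
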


\begin{proof} Since $|xx'|\leq t$, it follows that $C_x(\rho) \subset C_{x'}(\rho+t))$ and $C_{x'}(\rho) \subset C_x(\rho+t))$.
Hence,\emph{aff}$C_x(\rho)$ $\subset$  \emph{aff}$C_{x'}(\rho+t)$ and\emph{aff}$C_x(\rho)$ $\subset$ \emph{aff}$C_{x'}(\rho+t)$. From the premises
of the lemma  $d_x(\rho )=d_x(\rho+t)$ and $d_{x'}(\rho )=d_{x'}(\rho+t)$,
it follows that \emph{aff}$C_x(\rho+t)$ =\emph{aff}$C_x(\rho)$ $\subset$ aff$C_{x'}(\rho+t)$,
and \emph{aff}$C_{x'}(\rho+t)$=\emph{ aff}$C_{x'}(\rho)$ $\subset$ aff$C_x(\rho+t)$. Therefore, \emph{aff}$C_{x'}(\rho)$=\emph{aff}$C_{x'}(\rho+t)$=\emph{aff}$C_x(\rho+t)$=\emph{aff}$C_x(\rho+t)$
\end{proof}

\smallbreak
\begin{theorem}
Let $X\subset \mathbb{R}^d $ be  a $t$-bonded set, and there is some
$\rho>0$ such that for any point  $x$ from $X$, the following condition holds true,
$$
d_x(\rho )= d_x(\rho+t)=n(x) \eqno(3.2)
$$
Then $n(x)\equiv d$ and $\forall x\in X$
aff$C_x(\rho)$=aff$(X)=\mathbb{R}^d$.
\end{theorem}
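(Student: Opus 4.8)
The plan is to use Lemma 3.1 as an engine that propagates equality of cluster affine hulls across individual $t$-links, and then to exploit the connectivity built into the $t$-bonded structure to conclude that all these hulls coincide with $\mathrm{aff}(X)=\mathbb{R}^d$.

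First I would observe that hypothesis (3.2) asserts the stabilization condition $d_x(\rho)=d_x(\rho+t)$ at \emph{every} point $x\in X$. Consequently, whenever two points $x,x'\in X$ satisfy $|xx'|\leq t$, both premises (3.1) of Lemma 3.1 are automatically met, so the lemma applies and yields $\mathrm{aff}\,C_x(\rho)=\mathrm{aff}\,C_{x'}(\rho)$ (indeed all of the hulls listed in its conclusion coincide). This is the local step, valid for any single link.

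Next I would fix an arbitrary pair of points $x,y\in X$ and invoke the defining property of a $t$-bonded set [Definition 2.1(2)] to obtain a $t$-chain $x=x_0,x_1,\ldots,x_m=y$ with $|x_{i-1}x_i|\leq t$ for each $i$. Applying the local step to each consecutive link gives
$$\mathrm{aff}\,C_{x_0}(\rho)=\mathrm{aff}\,C_{x_1}(\rho)=\cdots=\mathrm{aff}\,C_{x_m}(\rho),$$
whence $\mathrm{aff}\,C_x(\rho)=\mathrm{aff}\,C_y(\rho)$. Since $x,y$ were arbitrary, there is a single plane $\Pi$ with $\mathrm{aff}\,C_x(\rho)=\Pi$ for every $x\in X$; in particular $n(x)=\dim\Pi$ is the same for all $x$, so $n(x)$ is constant, and it only remains to identify this constant with $d$. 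For this I would note that $x\in C_x(\rho)\subset\Pi$ for every $x$, so $X\subset\Pi$ and hence $\mathrm{aff}(X)\subset\Pi$; conversely $C_x(\rho)\subset X$ gives $\Pi=\mathrm{aff}\,C_x(\rho)\subset\mathrm{aff}(X)$. Thus $\Pi=\mathrm{aff}(X)=\mathbb{R}^d$ by condition (1) of Definition 2.1, so $n(x)\equiv\dim\Pi=d$ and $\mathrm{aff}\,C_x(\rho)=\mathbb{R}^d$ for all $x$.

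The argument is short once Lemma 3.1 is available, and the only real content is recognizing that $t$-chain connectivity lets the purely local equality of Lemma 3.1 propagate globally, and that—since every point is the center of its own cluster—the clusters exhaust $X$. The point I expect to require the most care is that Lemma 3.1 needs the stabilization hypothesis at \emph{both} endpoints of each link; this is precisely why (3.2) must be assumed for all $x\in X$ rather than at a single base point, and it is what makes the link-by-link propagation along the chain legitimate.
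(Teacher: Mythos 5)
Your proposal is correct and follows essentially the same route as the paper: apply Lemma 3.1 link by link along a $t$-chain to show all $\rho$-cluster affine hulls coincide with a single plane $\Pi$, then sandwich $\Pi$ between $\mathrm{aff}\,C_x(\rho)$ and $\mathrm{aff}(X)=\mathbb{R}^d$ using condition (1) of Definition 2.1. Your explicit remark that (3.2) must hold at both endpoints of each link for Lemma 3.1 to apply is a worthwhile clarification that the paper leaves implicit.
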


\begin{proof} Let us take a point $x_0\in X$ and denote
\emph{aff}$C_{x_0}(\rho):=\Pi^n$ where $n=d_{x_0}(\rho)$. We'll show
that any point $x\in X$ belongs to $\Pi^n$, and therefore
$\Pi^n=\mathbb{R}^d$.

Let $[x_0,x_1...x_i...x_n=x]$ be a  $t$-chain that  connects $x_0$
and  $x$. By the $t$-bonded set definition for any  points $x$ and
$x'$ there exists a $t$-chain that connects $x$ and
$x'$. Since for any $i$, $0\leq i\leq {n-1}$
the distance $|x_i x_{i+1}|\leq t$, if follows from lemma3.1, that for any $i$, $0\leq i\leq {n-1}$,
\emph{aff}$C_{x_i}(\rho)$=\emph{aff}$C_{x_{i+1}}(\rho)$. Therefore,
\emph{aff}$C_{x_0}(\rho)$=\emph{aff}$C_x(\rho)$. We proved that any point $x\in
X$, belongs to \emph{aff}$C_{x_0}(\rho)$. Meanwhile, \emph{aff}$C_{x_0}(\rho)$
$\subset$ \emph{aff}$X$, and therefore
\emph{aff}$C_{x_0}(\rho))$=\emph{aff}$(X)=\mathbb{R}^d$ for any $x_0 \in X$.
\end{proof}

\smallbreak
\begin{theorem}
Let $X\subset \mathbb{R}^d $ be  a $t$-bonded set, such that  for every given
$\rho\leq t\cdot (d-1)$ the ranks of all $\rho$-clusters are equal
($d_x(\rho)=d(\rho)$, $\forall x\in X$ ). Then, for any $\rho'\geq d\cdot t$ and any $x\in X$, the rank $d_x(\rho')\equiv d$.
\end{theorem}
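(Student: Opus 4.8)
The plan is to track the common rank function of the clusters, reduce the statement to Theorem 3.1, and isolate the only genuinely geometric input as a rigidity fact that forbids a "thin" staircase growth of the rank.

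\textbf{Setup and monotonicity.} First I would record that $d_x(\rho)$ is non-decreasing in $\rho$, since $C_x(\rho)\subseteq C_x(\rho')$ whenever $\rho\le\rho'$, hence \emph{aff}$\,C_x(\rho)\subseteq$ \emph{aff}$\,C_x(\rho')$. Under the hypothesis the rank is independent of $x$ on the whole range $\rho\le (d-1)t$, so there I may write $d(\rho):=d_x(\rho)$. Two boundary values are immediate: $d(0)=0$ (a one-point cluster), and $d(t)\ge 1$, because $t$-bondedness forces every point to have another point of $X$ within distance $t$ (the first link of any $t$-chain leaving $x$), so no $t$-cluster is a single point. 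The case $d=1$ is then immediate, so I assume $d\ge 2$.

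\textbf{Reduction to Theorem 3.1.} The key observation is that Theorem 3.1 converts any "$t$-window of constancy" into full rank: if $d(\rho_0)=d(\rho_0+t)$ for some $\rho_0$ with $0<\rho_0\le (d-2)t$ (so that both $\rho_0$ and $\rho_0+t$ lie in the range where the rank is $x$-independent), then $d_x(\rho_0)=d_x(\rho_0+t)=n(x)$ for every $x$, and Theorem 3.1 yields $n(x)\equiv d$, i.e. $d(\rho_0)=d$. By monotonicity the rank then equals $d$ for all $\rho\ge\rho_0$, and since $\rho_0\le (d-2)t\le dt$ we are done. It remains to treat the case where no such equality occurs, i.e. where the rank strictly increases across every step:
\[
0=d(0)<d(t)<d(2t)<\cdots<d((d-1)t).
\]
These are $d$ strictly increasing integers, so $d((d-1)t)\ge d-1$.

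\textbf{The main obstacle.} The delicate point is to exclude the staircase possibility $d((d-1)t)=d-1$, which would force $d(kt)=k$ for every $k$, and in particular $d(t)=1$; here the rank reaches only $d-1$ inside the hypothesis range, and Theorem 3.1 cannot be applied at $(d-1)t$ because $x$-independence at $dt$ is not available. I would rule this out by a rigidity lemma: a full-dimensional $t$-bonded set in $\mathbb{R}^d$ with $d\ge 2$ cannot have all its $t$-clusters of rank $1$. Indeed, if $d(t)=1$ then for each $x$ the affine hull of $C_x(t)$ is a line $\ell_x$ through $x$ containing at least one further point. For two points $x,x'$ with $|xx'|\le t$ one has $x'\in\ell_x$ and $x\in\ell_{x'}$; since a line is determined by two distinct points, $\ell_x=\ell_{x'}$. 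Propagating this equality along a $t$-chain (which exists between any two points of $X$) shows that all of $X$ lies on one common line, contradicting \emph{aff}$\,X=\mathbb{R}^d$. Hence $d(t)\ne 1$, the staircase is impossible, and in the strictly increasing case we must have $d((d-1)t)=d$. I expect this rank-$1$ rigidity step to be the crux; the surrounding argument is monotonicity and a pigeonhole count feeding Theorem 3.1.

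\textbf{Conclusion.} In every case the rank attains its maximal value $d$ no later than $\rho=(d-1)t$: either Theorem 3.1 gives $d(\rho_0)=d$ for some $\rho_0\le (d-2)t$, or the strictly increasing case gives $d((d-1)t)=d$. Thus \emph{aff}$\,C_x((d-1)t)=\mathbb{R}^d$ for every $x$, and by monotonicity \emph{aff}$\,C_x(\rho')=\mathbb{R}^d$, i.e. $d_x(\rho')\equiv d$, for all $\rho'\ge (d-1)t$, and in particular for all $\rho'\ge dt$, which is the assertion (in fact with the slightly sharper threshold $(d-1)t$).
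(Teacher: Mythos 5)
Your proof is correct and follows essentially the same route as the paper's: your rank-$1$ rigidity step (each $t$-cluster of rank $1$ spans a line, two points at distance $\le t$ force their lines to coincide, and propagation along a $t$-chain puts all of $X$ on one line, contradicting \emph{aff}$\,X=\mathbb{R}^d$) is exactly the paper's case $d(t)=1$, while the pigeonhole on the non-decreasing integer sequence $d(kt)$ feeding Theorem 3.1 is exactly its case $d(t)\ge 2$. If anything, your write-up is slightly tighter: you keep the pigeonhole inside the hypothesis range $\rho\le (d-1)t$ (the paper's chain runs up to $d(d\cdot t)$, where the assumed $x$-independence of the rank is not available), and you thereby also record the sharper stabilization threshold $(d-1)t$.
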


\begin{proof} Let us consider two cases  $d(t)=1$ and  $d(t)\geq 2$.

Case $d(t)=1$. $C_x(t)\subset L_0$. We'll show that $X\subset L$, i.e.
any $x'\in X$ is on the $L_0$, In fact, let us connect $x$ to $x'$
by a $t$-chain $(x=x_0, \ldots , x_k=x')$. Let $L_i$ be a line of
cluster  $C_{x_i}(t)$ for $i\in [0,k]$. Since for any $i\in[1,k]$
$C_{x_{i-1}}(t)\cap C_{x_i}(t)\supset \{x_{i-1}, x_i\}$, it implies that
the lines $L_{i-1}$ and $L_i$ of neighboring clusters are passing through both
these points: $x_{i-1}, x_i\in L_{i-1}$ and  $x_{i-1}, x_i\in
L_i, \, i\in [1,k]$. Hence,  $L_{i-1}=L_i$. Since the last identity is true
for all $i\in [1,k]$, we conclude that   all lines $L_i$ coincide with the
line $L_0$. Therefore,  $L_0=L_1=\cdots =L_k\ni x'$. We showed  that any point $x'\in L_0$.

We proved that $d(t)=1$,  implies  $d(\rho')=1$ for all $\rho'\geq d\cdot t$. This concludes the proof of the case $d(t)=1$.

Let us assume now that  $d(t)\geq 2$, and consider the function $d(\rho)$ defined at $[1,d\cdot t]$ and its values at
points $kt$: $d(t)=2\leq d(2t)\leq  d(3t)\leq \ldots \leq d(d\cdot t)$.
If all these  inequalities are strong,  then  $d(d\cdot t)=d+1$ which  is
impossible since $X\subset \mathbb{R}^d$. Therefore,  there is $k$,
$1\leq k\leq d-1$ such that $d(k\cdot t)=d((k+1)\cdot t)$. Hence,  by  Theorem 3.1,
$d(k\cdot t)=d((k+1)\cdot t)=d(\rho ')=d$ for all $\rho '\geq d\cdot t$.
\end{proof}

\smallbreak
\textbf {Remark.} We proved that under the conditions of  Theorem 3.2
 stabilization of the rank of any cluster definitely occurs when $\rho \geq d\cdot t$. However, for some sets  it might occur  even if $\rho \le d\cdot t$.

\smallbreak
\section{Symmetry of Clusters}
In this paragraph we again assume that X is a t-set in $\mathbb{R}^d$ which  by definition implies
 \emph{aff}$X =\mathbb{R}^d$.
Let us denote by $O(x,d)$ a group of all isometries of space
$\mathbb{R}^d$ which leave $x\in \mathbb{R}^d$ fixed.

 \smallbreak
 \textbf{Definition 4.1.} [The Symmetry of a Cluster.]
Assume
$x\in X$, then an isometry $\tau\in O(x,d)$ is called a
\emph{symmetry} of the cluster $C_x(\rho)$ if
$\tau(C_x(\rho))=C_x(\rho)$.

 \smallbreak
We want to emphasize that since
$\tau\in O(x,d)$,  any symmetry $\tau$  of a cluster leaves its center $x$ fixed.
We denote by $S_x(\rho)$ a group of all symmetries $\tau$ of the cluster
$C_x(\rho)$.

\smallbreak Now let\emph{aff}$C_x(\rho))=\Pi_x^n$ where $\Pi^n_x$ is an
$n$-dimensional affine plane, $x\in \Pi^n_x$, and $n \leq d$. We
denote by $\overline{S}_x(\rho)$ a  group of all isometries from
$O(x,n)$ that leave invariant the plane $\Pi_x^n$ and the cluster
$C_x(\rho)$.

If $n=d$, then   $\overline{S}_x(\rho)=S_x(\rho)$. Let  $n<d$, then
we denote the affine hull of $C_x(\rho)$ by $\Pi_x^n$, and the complementary orthogonal $(d-n)$-plane passing through
$x$ by $Q_x^{d-n}$. Let  $s\in S_x(\rho)$ be a symmetry of the $\rho $-cluster
$C_x(\rho)$. It is clear that any such symmetry is an orthogonal
transformation of  the $d$-space  which is a product of the
transformation $\bar s\in \Pi^n_x$ and of an arbitrary
transformation $g\in O(x,d-n)$ of the complementary plane. The
following lemma summarizes some facts on the cluster group.

\smallbreak
\begin{lemma} The following statements hold true:
\\ (1) If aff$C_x(\rho))=\Pi_x^n$ and $n<d$, then $S_x(\rho)= \overline{S}_x(\rho)\bigoplus O(x,d-n)$, where  $\overline{S}_x(\rho)
\subset O(x,n)$, and $O(x,d-n)$ is the  full group of isometries
 of the plane $Q_x^{d-n}$ complementary to  $\Pi_x^n$ and passing through the point $x$.
\\ (2) The group $\overline{S}_x(\rho)$ is a finite subgroup of $O(x,n)$ . Particularly,  if aff$C_x(\rho)=\mathbb{R}^d$,
then group $S_x(\rho)=\overline{S}_x(\rho)$  is a finite subgroup of $O(x,d)$;
\\ (3)  The group $S_x(\rho)$ is finite if and only if aff$C_x(\rho)=\mathbb{R}^d$ or aff$C_x(\rho)=\mathbb{R}^{d-1}$.
\end{lemma}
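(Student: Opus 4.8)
The plan is to handle the three parts in order, since part (3) will fall out quickly once parts (1) and (2) are established. Throughout I would work with the orthogonal splitting $\mathbb{R}^d = \Pi_x^n \oplus Q_x^{d-n}$ determined by the affine hull of the cluster and its orthogonal complement through the fixed center $x$; both subspaces contain $x$, which is convenient since every element of $S_x(\rho)$ fixes $x$.

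For part (1), the first observation is that every $\tau \in S_x(\rho)$ preserves the affine hull of the cluster: since $\tau$ is an isometry with $\tau(C_x(\rho)) = C_x(\rho)$, applying the affine-hull operation to both sides and using that isometries (being affine maps) commute with it yields $\tau(\Pi_x^n) = \Pi_x^n$. Because $\tau$ fixes $x$ and is orthogonal, invariance of $\Pi_x^n$ forces invariance of the orthogonal complement $Q_x^{d-n}$, so $\tau$ splits as a product $\bar s \times g$ with $\bar s \in O(x,n)$ acting on $\Pi_x^n$ and $g \in O(x,d-n)$ acting on $Q_x^{d-n}$. Since $C_x(\rho)$ lies entirely inside $\Pi_x^n$, the invariance condition $\tau(C_x(\rho)) = C_x(\rho)$ constrains only $\bar s$, saying precisely that $\bar s \in \overline{S}_x(\rho)$, while $g$ is left free. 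Conversely, any pair with $\bar s \in \overline{S}_x(\rho)$ and $g \in O(x,d-n)$ reassembles to an element of $S_x(\rho)$, because $g$ moves no cluster point. This yields the direct sum $S_x(\rho) = \overline{S}_x(\rho) \oplus O(x,d-n)$.

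For part (2), the key input is that $C_x(\rho) = X \cap B_x(\rho)$ is a \emph{finite} set: the $r$-condition makes $X$ uniformly discrete, and a standard packing estimate (disjoint balls of radius $r/2$ about the cluster points, all inside a slightly larger ball) bounds the number of points of $X$ in the bounded region $B_x(\rho)$. Each $\bar s \in \overline{S}_x(\rho)$ then permutes this finite set, giving a homomorphism $\overline{S}_x(\rho) \to \mathrm{Sym}(C_x(\rho))$. I would show this map is injective: if two elements induce the same permutation, their quotient fixes every point of $C_x(\rho)$, and since $C_x(\rho)$ affinely spans $\Pi_x^n$ by the definition of $\Pi_x^n$, an isometry of $\Pi_x^n$ fixing an affine-spanning set is the identity. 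Injectivity into a finite symmetric group makes $\overline{S}_x(\rho)$ finite. The special case $\mathrm{aff}\,C_x(\rho) = \mathbb{R}^d$, where $n=d$ and $S_x(\rho) = \overline{S}_x(\rho)$, follows at once.

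Part (3) is then a corollary. By part (2) the factor $\overline{S}_x(\rho)$ is always finite, so by the decomposition in part (1) the finiteness of $S_x(\rho)$ is equivalent to the finiteness of $O(x,d-n)$. The full orthogonal group of an $m$-dimensional space is finite exactly when $m \leq 1$: it is trivial for $m=0$ and of order two for $m=1$, but contains the infinite rotation group $SO(m)$ once $m \geq 2$. Hence $S_x(\rho)$ is finite if and only if $d-n \leq 1$, i.e. $n = d$ or $n = d-1$, as claimed. I expect the only genuinely delicate point to be the "conversely" direction of part (1), namely checking that letting $g$ range over all of $O(x,d-n)$ really preserves the cluster; this rests squarely on the cluster being contained in $\Pi_x^n$, so that $Q_x^{d-n}$ carries no cluster points. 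Everything else is routine once the orthogonal splitting and the finiteness of $C_x(\rho)$ are in place.
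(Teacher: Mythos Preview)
Your proposal is correct and follows essentially the same approach as the paper: decompose any $\tau\in S_x(\rho)$ along the orthogonal splitting $\Pi_x^n\oplus Q_x^{d-n}$ for part~(1), use that $C_x(\rho)$ is finite and affinely spans $\Pi_x^n$ to get finiteness of $\overline{S}_x(\rho)$ for part~(2), and then read off part~(3) from the decomposition together with the fact that $O(x,m)$ is finite precisely when $m\leq 1$. Your write-up is in fact a bit more explicit than the paper's (you justify why $\tau$ preserves $\Pi_x^n$ via the affine-hull argument and why $C_x(\rho)$ is finite via the $r$-condition, both of which the paper takes for granted), but the underlying argument is the same.
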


\begin{proof}
 (1) Any symmetry $\tau$ from $S_x(\rho)$ can be represented as
a  product of two isometries $f\cdot g$ where $f\in
\overline{S}_x(\rho)$ and $g\in O(x,d-n)$ is an arbitrary isometry
that operates in the$(d-n)$-plane $\Pi_x^{d-n}$ plane,
complementary to the plane $\Pi_x^n$,   and leaves the center $x$
fixed. At the same time, the product of any symmetry from the
group $\overline{S}_x(\rho)$ and any symmetry  $g\in O(x,d-n)$, is
a symmetry from $S_x(\rho)$ . Therefore $S_x(\rho)=
\overline{S}_x(\rho)\bigoplus O(x,d-n)$.

 (2) Let us prove first that if \emph{aff}$C_x(\rho))=\Pi_x^n$ and
$n<d$, then $\overline{S}_x(\rho)$ is a  finite subgroup of
$O(x,n)$. According to (1),  $S_x(\rho)=
\overline{S}_x(\rho)\bigoplus O(x,d-n)$ where
$\overline{S}_x(\rho)$ is a group of all symmetries $f\in O(x,n)$
that operates on $C_x(\rho)$ as a subset of $\Pi_x^n$. Since
$C_x(\rho)$ is a finite set and any point in $\mathbb{R}^n$ is an
affine combination of points from $C_x(\rho)$, we conclude  that
any  $\tau$ from  $\overline{S}_x(\rho)$ is completely determined
by its values on the points $x$  from the finite set $C_x(\rho)$.
Therefore, $\overline{S}_x(\rho)$ is a finite subgroup of
$O(x,n)$. If $n=d$ then $\overline{S}_x(\rho)=S_x(\rho)$,  hence,
$S_x(\rho)=\overline{S}_x(\rho)$ is a finite subgroup of $O(x,d)$.

 (3) It follows immediately from the second part of  the lemma that the condition \emph{aff}$C_x(\rho)=\mathbb{R}^d$ is  sufficient
for the group $S_x(\rho)$ to be  finite. It follows from (1) that  $n<d$ implies
then $S_x(\rho)= \overline{S}_x(\rho)\bigoplus O(x,d-n)$. If $d-n>1$, then  $O(x,d-n)$ is an infinite group, and therefore $S_x(\rho)$ is infinite.
If $d-n=1$, then $O(x,d-n)$ is a finite group and $S_x(\rho)= \overline{S}_x(\rho)\bigoplus O(x,d-n)$ is a finite group as a product of two finite groups.
Hence, the condition \emph{aff}$C_x(\rho)=\mathbb{R}^d$ or aff$C_x(\rho)=\mathbb{R}^{d-1}$  is also necessary for the group $S_x(\rho)$ to be finite.
\end{proof}

\smallbreak
\begin{lemma} Assume $\overline{S}_x(\rho_0)$ and $\overline{S}_x(\rho_0+t)$ are finite groups
as defined at the beginning of the paragraph for clusters
$C_x(\rho_0)$  and $C_x(\rho_0+t)$ respectively. The following statements hold true.

(1) If $S_x(\rho_0+t)=S_x(\rho_0)$, then aff$C_x(\rho_0)=$aff$C_x(\rho_0+t)=\Pi_x^n$, $n\leq d$.

(2) The equality $S_x(\rho_0)=S_x(\rho_0+t)$ is equivalent to  $\overline{S}_x(\rho_0)=\overline{S}_x(\rho_0+t)$.
  \end{lemma}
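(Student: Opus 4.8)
The plan is to lean on two structural facts: a general inclusion of cluster groups, and the splitting $S_x(\rho)=\overline{S}_x(\rho)\oplus O(x,d-n)$ supplied by Lemma~4.1. First I would record the inclusion $S_x(\rho_0+t)\subseteq S_x(\rho_0)$. If $\tau\in S_x(\rho_0+t)$ then $\tau\in O(x,d)$ fixes $x$, hence preserves every ball $B_x(s)$ centred at $x$; since $C_x(\rho_0)=C_x(\rho_0+t)\cap B_x(\rho_0)$ and $\tau$ preserves both factors, $\tau(C_x(\rho_0))=C_x(\rho_0)$, so $\tau\in S_x(\rho_0)$. Thus enlarging the cluster can only shrink the symmetry group, and the hypothesis $S_x(\rho_0)=S_x(\rho_0+t)$ is genuinely a \emph{stabilisation} condition rather than a bare equality.

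For Part~(1), write $n_0=d_x(\rho_0)$ and $n_1=d_x(\rho_0+t)$; since $C_x(\rho_0)\subseteq C_x(\rho_0+t)$ we always have $\mathrm{aff}\,C_x(\rho_0)\subseteq\mathrm{aff}\,C_x(\rho_0+t)$ and $n_0\le n_1$, so it suffices to prove $n_1\le n_0$. The idea is to recover the affine hull from the group: the subgroup of $S_x(\rho)$ fixing $\mathrm{aff}\,C_x(\rho)$ pointwise is exactly the factor $O(x,d-n)$, whose fixed-point set is $\Pi_x^n$. When $d-n\ge 2$ this factor contains $SO(x,d-n)$, which (as $\overline{S}_x(\rho)$ is finite by Lemma~4.1(2)) is the connected component of the identity in $S_x(\rho)$; hence $\Pi_x^n$ is determined by the group alone as the fixed set of that identity component. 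So if both ranks satisfy $d-n_i\ge 2$, equality of groups forces $\mathrm{aff}\,C_x(\rho_0)=\mathrm{aff}\,C_x(\rho_0+t)$ at once. The outstanding possibilities are $n\in\{d-1,d\}$, where by Lemma~4.1(3) the groups are finite; here one must compare the two splittings directly, using that $n_0=d-1$ forces the reflection in the hyperplane $\Pi_x^{d-1}$ to be the $O(x,1)$-factor of $S_x(\rho_0)$, and must lie in $S_x(\rho_0+t)$ as well.

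For Part~(2), once Part~(1) provides a common hull $\Pi_x^n$ with common complementary plane $Q_x^{d-n}$, the forward implication is immediate from the intrinsic description $\overline{S}_x(\rho)=\{g\in S_x(\rho): g|_{Q_x^{d-n}}=\mathrm{id}\}$: intersecting the two equal groups $S_x(\rho_0)=S_x(\rho_0+t)$ with the same condition yields $\overline{S}_x(\rho_0)=\overline{S}_x(\rho_0+t)$. Conversely, if $\overline{S}_x(\rho_0)=\overline{S}_x(\rho_0+t)$ and the two clusters share the plane $\Pi_x^n$ (so that the complementary factor $O(x,d-n)$ is the same in both splittings), then $S_x(\rho)=\overline{S}_x(\rho)\oplus O(x,d-n)$ has both summands coinciding, giving $S_x(\rho_0)=S_x(\rho_0+t)$.

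The main obstacle sits entirely in Part~(1), in controlling the rank. In the finite regime the abstract group is blind to the hull: the same finite subgroup can be read both as $\overline{S}\oplus O(x,1)$ over a hyperplane and as a full-dimensional $\overline{S}$, with the hyperplane reflection playing the role of the complementary factor in one reading and of an honest planar symmetry in the other. The crux of the proof is therefore to show that the stabilisation $S_x(\rho_0)=S_x(\rho_0+t)$ rules out a rank jump from $n_0=d-1$ to $n_1=d$; this is precisely where the finiteness of $\overline{S}_x(\rho_0)$ and $\overline{S}_x(\rho_0+t)$ and the action of the hyperplane-reflection factor on $C_x(\rho_0+t)$ must be exploited, and I expect this boundary case to demand the most care and possibly an auxiliary hypothesis.
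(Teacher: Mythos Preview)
Your route to Part~(1) through the identity component $SO(x,d-n)$ is more structural than the paper's. The paper argues by raw cardinality: assuming $n_1<n_2$ and (without loss of generality) $n_2=d$, it invokes Lemma~4.1 to declare $S_x(\rho_0+t)$ finite and $S_x(\rho_0)$ infinite, contradicting their equality. This is shorter than your connected-component argument in the generic regime $d-n_1\ge 2$, but both arguments run aground at exactly the boundary case you isolate, $n_1=d-1$, $n_2=d$. There Lemma~4.1(3) makes \emph{both} groups finite, so neither the paper's finite/infinite dichotomy nor your recovery of $\Pi_x^n$ as the fixed set of the identity component applies. You flag this honestly as the crux; the paper simply asserts ``$S_x(\rho_0)$ is an infinite group'' at this step, which is false when $n_1=d-1$.

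This boundary case is not a mere technicality. In $\mathbb{R}^2$ take $C_x(\rho_0)=\{x,\,x\pm(a,0)\}$ and $C_x(\rho_0+t)=\{x,\,x\pm(a,0),\,x\pm(0,b)\}$ with $0<a\le\rho_0<b\le\rho_0+t$ and $a\ne b$: both clusters have the identical Klein four-group $\{I,R_x,R_y,-I\}\subset O(x,2)$ as their full symmetry group, yet their affine hulls differ. Such clusters can be realised inside a genuine $t$-bonded planar set by attaching $(0,\pm b)$ to the horizontal axis via long $t$-chains that stay outside $B_x(\rho_0+t)$. So your instinct that ``an auxiliary hypothesis'' may be needed is correct: Part~(1) as stated does not follow from the group equality alone, and the paper's proof shares the very gap you identified rather than closing it.

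For Part~(2) your argument and the paper's coincide: once a common hull $\Pi_x^n$ is available, the decomposition $S_x(\rho)=\overline{S}_x(\rho)\oplus O(x,d-n)$ transfers the equality in either direction. In the converse both you and the paper tacitly assume that $\overline{S}_x(\rho_0)=\overline{S}_x(\rho_0+t)$ already entails a common plane (the paper writes ``first of all, it means that they operate in the same planes'' without further argument); since $\overline{S}_x(\rho)$ is by definition a subgroup of the isometry group of $\Pi_x^{d_x(\rho)}$, the equality is only well-posed once the planes agree, so this step is essentially definitional rather than substantive.
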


\begin{proof} (1) Let us prove the first statement by contradiction. Assume that\emph{aff}$C_x(\rho_0)=\Pi_x^{n_1}\subset$aff$C_x(\rho_0+t)=\Pi_x^{n_2}$, $n_1<n_2$.
The group $\overline{S}_x(\rho_0+t)$ of  symmetries operates in $\Pi_x^{n_2}$
where $n_2>n_1$. It is also a finite group,  and the restriction of
any isometry $f'\in \overline{S}_x(\rho_0+t)$ onto plane $\Pi^{n_1}$ is an isometry from
$\overline{S}_x(\rho_0)$. Without loosing  generality we can assume that $n_2$ is equal to $d$, i.e.
$S_x(\rho_0+t)= \overline{S}_x(\rho_0+t)$. It follows from  lemma 4.1 (4) that $S_x(\rho_0+t)$ is a finite group,
and $S_x(\rho_0)$ is an infinite group. Hence, $S_x(\rho_0+t)$ is not equal (proper subgroup) of $S_x(\rho_0)$.
That is a contradiction with the premise of the lemma $S_x(\rho_0+t)=S_x(\rho_0)$.

\smallbreak
(2) Assume that $S_x(\rho_0)=S_x(\rho_0+t)$. It follows from the first part of the lemma that
both of these groups operate in the same plane aff$C_x(\rho_0)=\Pi_x^n$ = aff$C_x(\rho_0+t)=\Pi_x^n$. From the lemma 4.1
 it follows $S_x(\rho_0)= \overline{S}_x(\rho_0)\bigoplus O(x,d-n$ and $S_x(\rho_0+t)= \overline{S}_x(\rho_0+t)\bigoplus O(x,d-n)$.
 Since $S_x(\rho_0)=S_x(\rho_0+t)$, we  conclude that  $\overline{S}_x(\rho_0)= \overline{S}_x(\rho_0+t)$.

Assume now that $\overline{S}_x(\rho_0)= \overline{S}_x(\rho_0+t)$. First of all, it means that they operate in the same planes.
Since by  lemma 4.1, $S_x(\rho)= \overline{S}_x(\rho_0)\bigoplus O(x,d-n)$ and $S_x(\rho_0+t)= \overline{S}_x(\rho_0+t)\bigoplus O(x,d-n)$, we conclude that
 $S_x(\rho_0)=S_x(\rho_0+t)$.
\end{proof}

\smallbreak
Let us remind that according to Definition 2.4 given a $t$-bonded set $X$ in $\mathbb{R}^d$ and  $\rho > 0$,  the $\rho$-cluster $C_x(\rho)$ is
 equivalent to  the $\rho$-cluster $C_{x'}(\rho)$, if there is a space isometry $g$ of $\mathbb{R}^d$,
 such that $g(x)=x'$ and $g(C_x(\rho)) = C_{x'}(\rho)$.

\smallbreak
 \textbf{Statement 4.1.} \emph{Given t-set $X\subset\mathbb{R}^d$ and $\rho_0 > 0$, if clusters  $C_x(\rho_0)$ and $C_{x'}(\rho_0)$ are
  equivalent,  then groups  $S_x(\rho_0)$ and
$S_{x'}(\rho)$ are conjugate.}

\smallbreak \textbf {Proof}. Since $C_x(\rho)$ and $C_{x'}(\rho)$,
i.e. there is an  isometry $g$ of $\mathbb{R}^d$,
 such that $g(x)=x'$ and $g(C_{x}(\rho)) = C_{x'}(\rho)$.
 Let $s\in S_x(\rho)$, then $g \circ s\circ g^{-1}$ maps $C_{x'}(\rho_0)$ onto $C_{x'}(\rho_0)$,
 i.e. $g\circ s \circ g^{-1}\in S_{x'}(\rho_0)$.  We proved that
 $S_{x'}(\rho_0)=gS_x(\rho_0)g^{-1}$.

\smallbreak
\textbf {Statement 4.2.} \emph{Let $X$ be a $t$-bonded set in $\mathbb{R}^d$, and  there is a point $x\in X$  and $\rho_0 > 0$ such that $S_{x} (\rho_0)=S_{x}(\rho_0+t)$.
  If the cluster  $C_x(\rho_0+t)$   is equivalent to a  centered at some  point $x'$ cluster $C_{x'} (\rho_0+t)$, then  $S_{x'} (\rho_0)=S_{x'}(\rho_0+t)$.}

\smallbreak
\textbf {Proof}. Since $C_x(\rho_0+t)$ is equivalent to $C_{x'+t} (\rho_0+t)$, there is an isometry $g$ of $\mathbb{R}^d$
  such that $g(x)=x'$ and $g(C_{x}(\rho_0+t)= C_{x'}(\rho_0+t)$. It follows from the  previous statement and the assumption  $S_{x} (\rho_0)=S_{x}(\rho_0+t)$,
  that $S_{x'}(\rho_0+t)=g\circ S_{x}(\rho_0+t)\circ g^{-1}=g\circ S_{x}(\rho_0)\circ g^{-1}=S_{x'}(\rho_0)$.

Now we'll prove a technical theorem which plays an important
role in  proving  local theorems.

\begin{theorem}[$t$-extension Theorem]
Let in the $t$-bonded set $X$ for two points $x$ and $x'\in X$ and  some
$\rho_0>0$,  clusters $C_x(\rho_0+t)$  and $C_{x'}(\rho_0+t)$ are
equivalent,  and the groups $S_x(\rho_0)$ and $S_x(\rho_0+t)$
coincide:  $$S_x(\rho_0)=S_x(\rho_0+t). \eqno(4.1)$$

Then any isometry $g\in Iso(\mathbb{R}^d)$ such that $g(x)= x'$
and that maps $C_x(\rho_0)$ onto  $C_{x'}(\rho_0)$ (i.e
$g(C_x(\rho_0))=C_{x'}(\rho_0)$) also maps  $(\rho_0+t)$-cluster
$C_x(\rho_0+t)$ onto $C_{x'}(\rho_0+t)$ (i.e $g(C_x(\rho_0+t))=
C_{x'}(\rho_0+t)$.
\end{theorem}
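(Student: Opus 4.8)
The plan is to reduce the arbitrary isometry $g$ to a single \emph{reference} isometry coming from the assumed cluster equivalence, observe that $g$ and this reference isometry differ only by a symmetry of the smaller cluster, and then invoke the stabilization hypothesis (4.1) to promote that symmetry to a symmetry of the larger cluster. This is the $t$-bonded analogue of the classical local-criterion extension argument.

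First I would use the equivalence of $C_x(\rho_0+t)$ and $C_{x'}(\rho_0+t)$ to fix \emph{some} isometry $g_0\in Iso(\mathbb{R}^d)$ with $g_0(x)=x'$ and $g_0(C_x(\rho_0+t))=C_{x'}(\rho_0+t)$. The first key step is to check that this same $g_0$ automatically carries the smaller cluster correctly, i.e. $g_0(C_x(\rho_0))=C_{x'}(\rho_0)$. Indeed, since $g_0$ is an isometry sending the center $x$ to $x'$, it maps the ball $B_x(\rho_0)$ onto $B_{x'}(\rho_0)$; intersecting the identity $g_0(X\cap B_x(\rho_0+t))=X\cap B_{x'}(\rho_0+t)$ with these balls and using $C_x(\rho_0)=C_x(\rho_0+t)\cap B_x(\rho_0)$ gives the claim.

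Next I would compare the given $g$ with $g_0$ by setting $s:=g_0^{-1}\circ g$. Since $g(x)=x'=g_0(x)$, the composite fixes the center, $s(x)=x$, so $s\in O(x,d)$. Since both $g$ and $g_0$ send $C_x(\rho_0)$ onto $C_{x'}(\rho_0)$, we get $s(C_x(\rho_0))=g_0^{-1}(C_{x'}(\rho_0))=C_x(\rho_0)$, hence $s\in S_x(\rho_0)$. Now hypothesis (4.1) enters: $S_x(\rho_0)=S_x(\rho_0+t)$ forces $s\in S_x(\rho_0+t)$, i.e. $s(C_x(\rho_0+t))=C_x(\rho_0+t)$. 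Applying $g_0$ and using $g=g_0\circ s$ then yields
$$g(C_x(\rho_0+t))=g_0(s(C_x(\rho_0+t)))=g_0(C_x(\rho_0+t))=C_{x'}(\rho_0+t),$$
which is exactly the desired conclusion.

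I expect the conceptual heart — and the only place where (4.1) is used — to be the implication $s\in S_x(\rho_0)\Rightarrow s\in S_x(\rho_0+t)$: a priori a symmetry of the $\rho_0$-cluster need not preserve the larger $(\rho_0+t)$-cluster, and it is precisely the stabilization of the symmetry group that guarantees it does. The remaining steps are routine bookkeeping with balls and isometries. I would also note that, unlike the classical Delone argument, no assumption on the rank of the clusters is required here: the entire reasoning takes place inside the full group $O(x,d)$ and is insensitive to whether aff$C_x(\rho_0)$ has dimension $d$, which is what makes the statement valid in the $t$-bonded generality.
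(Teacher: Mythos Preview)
Your argument is correct and is essentially identical to the paper's own proof: both fix a reference isometry from the assumed $(\rho_0+t)$-equivalence, form the composite with the arbitrary $\rho_0$-cluster map to land in $S_x(\rho_0)$, and then invoke (4.1) to upgrade to $S_x(\rho_0+t)$. The only difference is cosmetic---the paper writes the composite as $f^{-1}\circ g$ with the roles of the two isometries swapped in the notation, and it leaves implicit the verification that the reference isometry also carries $C_x(\rho_0)$ onto $C_{x'}(\rho_0)$, which you spell out.
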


\begin{proof}
By the assumption of the theorem, clusters $C_x(\rho_0+t)$ and
$C_{x'}(\rho_0+t)$ are equivalent. Therefore  there is an isometry
$g\in Iso(d)$ such that $g(x)= x'$ and
$g(C_x(\rho_0+t))=C_{x'}(\rho_0+t)$.

Let $f$ be an arbitrary isometry that maps  $\rho_0$-cluster
$C_x(\rho_0)$ onto $\rho_0$-cluster $C_{x'}(\rho_0)$. Let us take the  composition
$f^{-1} \circ g$. Then we have
$$(f^{-1} \circ g)(C_x(\rho_0))=f^{-1}(g(C_x(\rho_0))=f^{-1}(C_{x'}(\rho_0))=C_x(\rho_0).
\eqno(4.2) $$

From  (4.2) it follows that $f^{-1}\circ g\in S_x(\rho_0)$. Hence,
by  condition (4.1)  of  Theorem 4.1  $f^{-1}\circ g\in
S_x(\rho_0)=S_x(\rho_0 +t)$.  Let us put $f^{-1}\circ g:=s$, $s\in
S_x(\rho_0+t)$. Thus,  $f=g\circ s^{-1}$. Since $g$ maps
$C_x(\rho_0 +t)$ onto $C_{x'}(\rho_0 +t)$ and $s^{-1}$ maps
$C_x(\rho_0 +t)$ onto $C_x(\rho_0 +t)$, we conclude that $f$ maps
$C_x(\rho_0+t)$ onto $C_{x'}(\rho_0+t)$.
\end{proof}

\section{Criterion for Regular t-bonded Systems}
Let us remind that   by  Definition 2.5 a cluster counting
function  $N_(\rho)$ is equal to the cardinal number of equivalence classes of clusters with radius $\rho$
provided the cardinal number is finite.

\smallbreak \textbf {Definition 5.1} (Regular $t$-bonded System).
A $t$-bonded set $X$ is called a \emph{regular t-system} if for
any two points $x$ and $y$ from $X$ there is a symmetry $g$ of $X$
such that $g(x) = y$, i.e. if the symmetry group $Sym(X)$ acts
transitively on $X$.

\smallbreak
\begin{theorem}
Given $t$-set $X$ in $\mathbb{R}^d$, assume that there is $\rho_0
$ such that  the following two conditions hold:
\newline  (1) $N(\rho_0+t)=1$;
\newline (2) for some  point $x_0\in X$
$S_{x_0} (\rho_0)=S_{x_0}(\rho_0+t)$.

\ Then:
\\ (1)  Group  $G\subset Iso(R^d)$ of all symmetries of $X$ operates on $X$ transitively.
\\ (2)  For any point $x\in X$ aff$(C_x(\rho_0))$ = aff$C_x(\rho_0+t)$=aff$X$ = $\mathbb{R}^d$.
\end{theorem}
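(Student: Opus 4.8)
The plan is to derive both conclusions from a single preliminary observation — that the one-point stabilization in hypothesis~(2) is in fact global — and then to prove transitivity by an induction along $t$-chains that invokes the $t$-extension theorem (Theorem 4.1) at every step.

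First I would globalize hypothesis~(2). By condition~(1), $N(\rho_0+t)=1$, so every cluster $C_{x'}(\rho_0+t)$ is equivalent to $C_{x_0}(\rho_0+t)$; since $S_{x_0}(\rho_0)=S_{x_0}(\rho_0+t)$, Statement 4.2 yields $S_{x'}(\rho_0)=S_{x'}(\rho_0+t)$ for \emph{every} $x'\in X$. (The groups $\overline{S}_x$ are finite by Lemma 4.1(2), so the hypotheses of Lemma 4.2 are automatic.) This immediately gives conclusion~(2): Lemma 4.2(1) turns the equalities of symmetry groups into aff$C_x(\rho_0)=$ aff$C_x(\rho_0+t)$, i.e. $d_x(\rho_0)=d_x(\rho_0+t)$ for all $x$, and feeding this into Theorem 3.1 with $\rho=\rho_0$ forces $d_x(\rho_0)\equiv d$ and aff$C_x(\rho_0)=$ aff$C_x(\rho_0+t)=$ aff$X=\mathbb{R}^d$.

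For transitivity I would fix $x_0$, take an arbitrary $y\in X$, and use $N(\rho_0+t)=1$ to obtain an isometry $g$ of $\mathbb{R}^d$ with $g(x_0)=y$ and $g(C_{x_0}(\rho_0+t))=C_y(\rho_0+t)$. The claim is that this $g$ is a symmetry of $X$. I would prove, by induction on the length of a $t$-chain from $x_0$ to a point $w$, the statement $g(w)\in X$ and $g(C_w(\rho_0+t))=C_{g(w)}(\rho_0+t)$; the base case $w=x_0$ is the choice of $g$. For the inductive step let $w$ have a chain-neighbor $z$, so $|zw|\le t$, with $g(C_z(\rho_0+t))=C_{z'}(\rho_0+t)$ and $z'=g(z)\in X$ already known. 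Since $|zw|\le t\le\rho_0+t$ we get $w\in C_z(\rho_0+t)$, hence $w'=g(w)\in C_{z'}(\rho_0+t)\subset X$. Crucially, $C_w(\rho_0)\subset C_z(\rho_0+t)$ (a point within $\rho_0$ of $w$ is within $\rho_0+t$ of $z$), and because $g$ is a global isometry acting correctly on $C_z(\rho_0+t)$, it carries $C_w(\rho_0)$ onto $C_{w'}(\rho_0)$. Now the $t$-extension theorem applies at the pair $w,w'$: its hypotheses hold since $C_w(\rho_0+t)$ and $C_{w'}(\rho_0+t)$ are equivalent by $N(\rho_0+t)=1$, and $S_w(\rho_0)=S_w(\rho_0+t)$ by the first paragraph. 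Theorem 4.1 then gives $g(C_w(\rho_0+t))=C_{w'}(\rho_0+t)$, closing the induction.

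Since $X$ is $t$-bonded, every point is reachable from $x_0$ by a $t$-chain, so the induction yields $g(X)\subset X$; applying the identical argument to $g^{-1}$ (legitimate because the stabilization and equivalence hypotheses hold at every point, with $x_0$ and $y$ interchanged) gives $g^{-1}(X)\subset X$, whence $g(X)=X$ and $g\in G=Sym(X)$. As $g(x_0)=y$ with $y$ arbitrary, $G$ acts transitively on $X$, which is conclusion~(1). I expect the inductive step to be the main obstacle: the delicate point is arranging the bookkeeping so that the \emph{inner} cluster $C_w(\rho_0)$ of each newly reached point lies inside the already-certified \emph{outer} cluster $C_z(\rho_0+t)$ of its predecessor — precisely where $|zw|\le t$ is consumed — so that $g$ is guaranteed to send $C_w(\rho_0)$ onto $C_{w'}(\rho_0)$ before Theorem 4.1 is invoked, and ensuring this local isometry is produced uniformly at every link of every chain.
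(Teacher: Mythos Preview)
Your proposal is correct and follows essentially the same approach as the paper: globalize hypothesis~(2) via Statement~4.2, then run an induction along a $t$-chain using the $t$-extension Theorem~4.1 at each step to show that the isometry $g$ carrying one $(\rho_0+t)$-cluster onto another is in fact a symmetry of all of $X$, and deduce conclusion~(2) from Lemma~4.2(1) combined with Theorem~3.1. The only cosmetic difference is that you establish conclusion~(2) before transitivity, whereas the paper does it afterward; your inductive bookkeeping (making the hypothesis at each node explicitly ``$g(C_w(\rho_0+t))=C_{g(w)}(\rho_0+t)$'' rather than just ``$g(w)\in X$'') is if anything cleaner than the paper's.
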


\begin{proof} First of all,   note that because of  Statement 4.2 and  condition (1) of the theorem
(any two  $(\rho_0+t)$-clusters are equivalent), condition (2) of
the theorem holds true not only  for the point $x_0$, but for any
point $x$ in the set $X$ ($S_x (\rho_0)=S_x(\rho_0+t)$).

Let us prove that the subgroup $G\subset Iso(R^d)$ of all symmetries of $X$  operates on $X$ transitively.

By condition (1) of the theorem for any two points $x$ and $x'$ from $X$, there exists  $g\in Iso(R^d)$ ) such that $g$  maps $C_{x}(\rho_0+t)$ onto
$C_{x'}(\rho_0+t)$  and $g(x)=g(x')$. We'll prove that $g$ maps $X$ onto $X$.

Let us take
an arbitrary point $z\in X$ and connect $x$ to $z$ by a $t$-chain
$x=x_0,x_1, \ldots x_n=z$. We will show that $g$-images of all
points of the chain starting with $x_1$ and ending with $x_n=z$ belong
to $X$.

Since $|xx_1|\leq t$, it follows that  $C_{x_1}(\rho_0)\subset C_{x}(\rho_0+t)$ and
$g(C_{x_1}(\rho_0))=C_{y_1}(\rho_0)$ where $y_1=g(x_1)\in
C_{y_1}(\rho_0+t)\subset X$. By the Theorem 4.1
$g(C_{x_1}(\rho_0+t))=C_{y_1}(\rho_0 +t)$

 Hence we proved that $g(C_{x_1}(\rho_0+t))=C_{y_1}(\rho_0 +t)$ and  $g(x_1)=y_1\in X$.
Since the distance    $|x_ix_{i-1}|\leq t$ for all $i$ such that $1\leq i\leq {n-1}$, applying
the same argument to points $x_i$ and  $x_{i+1}$ as we applied to $x_0$ and  $x_1$, we prove that
for all non negative  integers  $i\leq n-1$,  $g(x_{i+1})=y_{i+1}\in X$ and $g(C_{x_{i+1}}(\rho_0+t))=C_{y_{i+1}}(\rho_0+t)$.

 Hence, $g(z)=g(x_n)=y_n\in X$, and $g(X)\subseteq X$.

 To show that
$g$ is a surjection we note  that the inverse isometry  $g^{-1}$
maps $x'$ onto $x$ and $C_{x'}(\rho)$ onto $C_{x}(\rho_0)$. Applying the same argument to $g^{-1}$ as we aplied to $g$ we show that $g^{-1}$  maps $X$ into $X$. Therefore,  for any $y\in X$,
$g^{-1}y\in X$. Hence,  $g$ is a surjection.

As we already mentioned,  $S_x(\rho_0)=S_x(\rho_0+t)$ for any  point $x\in X$. Therefore, by  lemma 4.2 (part 1)
 aff$C_x(\rho)=\Pi_x^n$ = aff$C_x(\rho_0+t)=\Pi_x^n$, i.e for every  $x\in X$ the following condition  holds
$$
d_x(\rho )= d_x(\rho+t)=n(x). \eqno(3.2)
$$
Then, by Theorem 4.1 $\forall x\in X$ $n(x)\equiv d$, therefore
\emph{aff}$C_x(\rho_0)$=\emph{aff}$C_x(\rho_0+t)$=\emph{aff}$X$=$\mathbb{R}^d$.
\end{proof}

\section{Multi-regular t-bonded Systems: Criterion}
\textbf{Definition 6.1}. A $t$-bonded set $X\subset \mathbb{R}^d$ is a
\emph{multi-regular} $t$-bonded system if there is  a finite set
$X_0=\{x_1,...,x_m\}$ such that
$$X=\cup_{i=1}^k Sym(X)\cdot x_i$$

This  definition is analogous to that of a crystal [Definition
1.6]. However, the situation is quite  different in some respects.
For instance, in  case of a crystal we deal with Delone sets which
are  always infinite sets. Therefore the requirement to represent
a Delone set as a  disjoint union of a finite number of regular
sets determines the selection of  the subclass of Delone sets,  called crystals. In
case of $t$ sets any finite set $X$ is a $t$-bonded set for some value
$t$ and can be thought as a multi-regular system:
$X=\bigcup_{x_i\in X_0}\hbox{\emph{Sym}}(X)\cdot x_i$, where $X_0=X$ and
\emph{Sym}$(X)$ is a trivial group. Nevertheless, the following question
makes sense in any case (finite or infinite) for $t$-bonded sets.

Let us call a $t$-bonded set an \emph{$m$-regular $t$-bonded system} if the number of
classes in $X$/\emph{Sym}$(X)=m$. Are there conditions which guarantee that  a
$t$-bonded set $X$ is  an $m$-regular system? The following criterion
answers the question.

\smallbreak
\begin{theorem} [Local Criterion for $m$-regular $t$-systems]
A $t$-bonded set $X\subset \mathbb{R}^d$ is an $m$-regular
$t$-system if and only if there is some $\rho_0>0$ such that two
conditions hold:
\newline
 1) $N(\rho_0)=N(\rho_0+t)=m;$
\newline
 2) $S_x(\rho_0)=S_x(\rho_0+t), \forall x\in X.$
\end{theorem}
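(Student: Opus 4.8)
The plan is to treat the statement as an equivalence and to prove the two implications separately, after isolating two facts that drive both of them. The first is a counting/refinement fact: whenever $N(\rho_0)$ and $N(\rho_0+t)$ are both finite and equal, two points $u,v\in X$ satisfy $C_u(\rho_0)\sim C_v(\rho_0)$ if and only if $C_u(\rho_0+t)\sim C_v(\rho_0+t)$. I would prove this by considering the natural map sending the equivalence class of $C_u(\rho_0+t)$ to the class of $C_u(\rho_0)$; restricting an equivalence isometry of $(\rho_0+t)$-clusters to the inner ball shows the map is well defined, and it is surjective because every cluster has a center. Since both index sets have the same finite cardinality $m$, this surjection of finite sets is a bijection, hence injective, which is exactly the nontrivial implication (equivalence of the $\rho_0$-clusters forces equivalence of the $(\rho_0+t)$-clusters). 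The second engine is a propagation lemma: if conditions (1)--(2) of the theorem hold and $C_x(\rho_0+t)\sim C_{x'}(\rho_0+t)$ via an isometry $g$ with $g(x)=x'$, then $g\in Sym(X)$. This generalizes the transitivity argument in the proof of Theorem 5.1.

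To prove the propagation lemma I would take an arbitrary $z\in X$, connect it to $x$ by a $t$-chain $x=x_0,x_1,\dots,x_n=z$, and show by induction on $i$ that $g(x_i)\in X$ and $g(C_{x_i}(\rho_0+t))=C_{g(x_i)}(\rho_0+t)$. The inductive step uses that $|x_ix_{i+1}|\le t$ forces $C_{x_{i+1}}(\rho_0)\subset C_{x_i}(\rho_0+t)$, so $g$ already carries $C_{x_{i+1}}(\rho_0)$ isometrically onto $C_{g(x_{i+1})}(\rho_0)$; hence these $\rho_0$-clusters are equivalent, and by the counting fact so are the corresponding $(\rho_0+t)$-clusters. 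Condition (2) then lets me invoke the $t$-extension Theorem (Theorem 4.1) at $x_{i+1}$ to upgrade the $\rho_0$-identity to $g(C_{x_{i+1}}(\rho_0+t))=C_{g(x_{i+1})}(\rho_0+t)$, closing the induction and giving $g(z)\in X$. Running the same argument for $g^{-1}$ yields $g(X)=X$.

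For sufficiency $(\Leftarrow)$ I assume (1)--(2) and partition $X$ according to the $(\rho_0+t)$-cluster equivalence class of each point; by condition (1) there are exactly $m$ parts. Two points in the same part have equivalent $(\rho_0+t)$-clusters, so by the propagation lemma the equivalence isometry lies in $Sym(X)$ and carries one point to the other, whence the part is contained in a single orbit. Conversely, any $g\in Sym(X)$ maps $C_x(\rho_0+t)$ onto $C_{g(x)}(\rho_0+t)$, so each orbit lies in a single part. Therefore orbits and parts coincide, and $X$ is $m$-regular in the sense of Definition 6.1.

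For necessity $(\Rightarrow)$ I start from an $m$-regular $X$. Since any symmetry carries a $\rho$-cluster to an equivalent one, points of the same orbit give equivalent clusters, so $N(\rho)\le m$ for every $\rho$; by Statement 2.3, $N$ is non-decreasing and bounded, hence eventually constant. Likewise each $S_x(\rho)$ is non-increasing in $\rho$ (a symmetry fixing $x$ and preserving a larger cluster preserves every smaller one), the rank $d_x(\rho)$ stabilizes since it is non-decreasing and bounded by $d$, and $\overline{S}_x(\rho)$ is finite by Lemma 4.1, so $S_x(\rho)=\overline{S}_x(\rho)\bigoplus O(x,d-n)$ stabilizes; as there are at most $m$ cluster types, one $\rho_0$ works for all $x$, giving condition (2) together with $N(\rho_0)=N(\rho_0+t)$. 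It then remains to identify this common value with $m$: if it were smaller, two points from distinct orbits would have equivalent $(\rho_0+t)$-clusters, and the propagation lemma, whose hypotheses now hold, would produce a symmetry of $X$ identifying them, contradicting the distinctness of the orbits. I expect this last point, together with the cluster-equivalence bookkeeping inside the propagation step, to be the main obstacle: in the regular ($m=1$) case of Theorem 5.1 the equivalence of clusters along the chain is automatic, whereas for general $m$ it must be tracked class by class, and it is precisely the equality $N(\rho_0)=N(\rho_0+t)$ that prevents a class from splitting between the two radii and thus keeps the $t$-extension Theorem applicable at every link.
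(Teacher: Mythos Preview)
Your argument is correct and, for the sufficiency direction, follows essentially the same route as the paper: your counting/refinement fact is the content of the paper's Remark~3, and your propagation lemma is precisely Lemma~6.1, proved by the same $t$-chain induction feeding into the $t$-extension Theorem. The paper, however, proves only the implication $(\Leftarrow)$ and never addresses necessity; your treatment of $(\Rightarrow)$---bounding $N(\rho)\le m$ via the orbit structure, stabilizing $N$ and the groups $S_x(\rho)$ at a common $\rho_0$, and then using the propagation lemma itself to force the stable value of $N$ up to $m$---is a genuine and correct addition that completes the equivalence.
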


\begin{proof} We precede the formal proof of the theorem  with several remarks and lemma 6.1, which from our point of view,
is  not only technical, but  also has
its own value.
The idea of the proof is similar to that of an
analogous criterion for a crystal ( see, e.g. [13], [14], [15]). Though in  case of the t-set on order to prove
this criterion we do not need to prove that the group \emph{Sym}$(X)$ is a
crystallographic group.

\smallbreak \textbf {Remark 1.} The local criterion for regular
systems [Theorem 5.1] is a particular case of Theorem 6.1. Indeed,
the  condition $N(\rho_0+t)=1$ implies $N(\rho_0)=N(\rho_0+t)=1$.

\smallbreak
\textbf {Remark 2.} Condition 1) of Theorem 6.1 means that with
the increasing radius $\rho$, the number of cluster classes on
segment $[\rho_0,\rho_0+t]$ does not increase, i.e. remains
unchanged: $N(\rho)= N(\rho+t)$. In addition, due to  the
condition  $2)$
, the   cluster group $S_x(\rho_0)$, $\forall x\in
X$, does not get smaller under the $t$-extension of
$\rho_0$-cluster: $S_x(\rho_0)=S_x(\rho_0+t)$.

\smallbreak  The stabilization of these two parameters (the
number of cluster classes and the order of cluster groups) on
segment $[\rho_0,\rho_0+t]$ implies their stabilization on  the half-line  $[\rho_0, \infty)$.

\smallbreak
\textbf {Remark 3.}
The set $X$ can be   represented as a  disjoint union $X=\cup_{i=1}^m X_i$, of not empty subsets $X_i$, where
 $X_i$ is a set of all points of $X$ that are centers of equivalent $\rho_0$-clusters; i.e. $x$ and $x'$ belong to the
 same $X_i$ if and only if there is an  isometry $g\in Iso (d)$ that maps  $C_x(\rho_0)$ onto $C_{x'}(\rho_0)$. With a given $\rho_0$,
 we will call two points that belong to the same $X_i$  $\rho_0$-equivalent points.
It is clear that in a general situation (without any requirements like  condition $1)$ in the theorem's statement, the representation of $X$ as
a disjoint union of subsets $X_i$   is finer for $(\rho_0+t)$-equivalent classes
than for $(\rho_0)$-equivalent classes. However,   condition $1)$ of the theorem means that
 these two representations of $X$ as  unions of $\rho_0$-equivalent and $(\rho_0+t)$-equivalent subsets are the same, i.e $x$ and $x'$
 are $\rho_0$-equivalent if and only if $x$ and $x'$ are  $(\rho_0+t)$-equivalent. We should also note that without any conditions   if $x$ and $x'$  are $(\rho_0+t)$-equivalent,
 then these points are $\rho_0$-equivalent. It  means that in the previous statement written as "if and only if" statement,
condition $1)$ of the theorem actually guarantees that $\rho_0$-equivalence of two points implies $(\rho_0+t)$-equivalence.

\smallbreak
\textbf {Remark 4}. Without losing generality, the condition $2)$ of the theorem could be required not for all points in $X$,
but rather for a finite number of points $X_0=\{x_1,...,x_m\}$ such that $x_i\in X_i$.  By statement 4.2, since all points in each $X_i$ are $(\rho_0+t)$-equivalent,
$S_x(\rho_0)=S_x(\rho_0+t)$ for some $x_i\in X_i$ implies that $S_x(\rho_0)=S_x(\rho_0+t), \forall x\in X_i$.

\smallbreak
\textbf {Remark 5}. It follows from the $t$-extension theorem [ Theorem 4.1]  that if $x$ and $x'$ belong to  $X_i$, then
any isometry $g\in Iso(\mathbb{R}^d)$ that maps $C_x(\rho_0)$ onto  $C_{x'}$ (i.e $g(C_x(\rho_0)=C_{x'}(\rho_0)$) and $x$ onto $x'$
$(g(x)= x')$ also maps  $(\rho_0+t)$-cluster $C_x(\rho_0+t)$ onto $C_{x'}(\rho_0+t)$ (i.e $C_x(\rho_0+t)= C_{x'}(\rho_0+t)$.

\smallbreak
\begin{lemma}  Let a $t$-bonded set  $X$ fulfil conditions $1)$ and  $2)$ of the theorem  and $X_i$ a subset of $X$
of  all  $\rho_0$-equivalent points from $X$,
$i\in [1,m]$. If  $G_i$ is a group  generated by all
isometries $f$ such that $f(x)=x)$ and  $f(C_x(\rho_0))=
C_{x'}(\rho_0)$, $\forall x,x'\in X_i$ ($G_i=<f>$), then:
\newline
1) $G_i$ operates transitively on every set $X_j$,
 $\forall j\in [1,m].$
\newline
 2) The group $G_i$ does not depend on $i$, $G_i=$Sym$(X)$.
\end{lemma}

\begin{proof} Since  for any $i$,  $X_i$ is not empty,  for any two points $x,x' \in X_i$
there is an  isometry g that maps  $C_x(\rho_0)$ onto $C_{x'}(\rho_0)$ and $x$ onto $x'$.
Therefore for any $i$, $G_i$ is not empty. Because of
the way we defined $X_i$,  at least one isometry exists in $G_i$ though   it could be more than one.

To  prove that for
any point $z \in X$, $g(z)\in X$ we can apply the same method  that was used to prove Theorem 5.1,
though  due  to the fact that unlike the conditions of Theorem 5.1, not all points in the set X are $(\rho_0+t)$-equivalent,  and therefore  we must
 be sure that the t-extension theorem  (remark 5) is applicable to the situation under consideration.

Let us take
an arbitrary point $z\in X$ and connect $x$ to $z$ by a $t$-chain
$x=x_0,x_1, \ldots x_n=z$. We will show that $g$-images of all
points in  the chain starting with $x_1$ and ending with $x_n=z$ belong
to $X$.

Since $|xx_1|\leq t$, it follows that  $C_{x_1}(\rho_0)\subset C_{x}(\rho_0+t)$ and
$g(C_{x_1}(\rho_0))=C_{y_1}(\rho_0)$ where $y_1=g(x_1)\in
C_{y_1}(\rho_0)\subset X$. Since $g(C_{x_1}(\rho_0))=C_{y_1}(\rho_0)$ and $y_1=g(x_1)$, it follows that $x_1$ and $y_1$ belong to the same
same set $X_j$. Therefore it  follows from   Theorem 4.1 (see also remark 5) that
$g(C_{x_1}(\rho_0+t))=C_{y_1}(\rho_0 +t)$

Hence,  we proved that $g(C_{x_1}(\rho_0+t))=C_{y_1}(\rho_0 +t)$ and  $g(x_1)=y_1\in X_j\subseteq X$.

Since for any positive
integer  $i \leq m$ the distance    $|x_ix_{i-1} \leq t$, applying
the same argument to the points $x_i$ and  $x_{i+1}$ as we applied to $x_0$ and  $x_1$, we prove that
for any positive
integer  $i \leq m$,  $g(C_{x_{i+1}}(\rho_0+t))=C_{y_{i+1}}(\rho_0+t)$ and  $g(x_{i+1})=y_{i+1}\in X_j\subseteq X$ for some $j\leq m$.

Hence, $g(z)=g(x_n)=y_n\in X$, and $g(X)\subseteq X$.

To show that
$g$ is a surjection,  we notice  that the inverse isometry  $g^{-1}$
maps $x'$ onto $x$ and $C_{x'}(\rho)$ onto $C_{x}(\rho_0)$. Applying the same argument to $g^{-1}$ as we applied to $g$ we show that $g^{-1}$  maps $X$ into $X$. Therefore,  for any $y\in X$
$g^{-1}y\in X$. Hence,  $g$ is a surjection. Therefore $G_i$
is a subgroup of  the group $G:=$Sym$(X)$ (i.e. ($G_i\subseteq G$)

Let us take now any $f \in Sym(X)$, and any point $x\in X_i$. Since $f$ maps $X$ onto $X$. It is clear that $f$
establishes $(\rho_0 +t)$-equivalency of points $x$ and $f(x)$, therefore $f \in G_i$.
Hence,  we proved that   for any positive integer  $i \leq m$,  $G_i$ =$Sym(X)$
\end{proof}

To complete the proof of the theorem we need to make  two observations. First,  by the definition of the set
$X_i$ and group $G_i$,  the group $G_i$ acts transitively on $X_i$, therefore, $X_i=G\cdot x_i$. Second, since for  any positive
integer  $i \leq m$,  $G_i=Sym(X)$, it follows that  $X_i=Sym(X)\cdot x_i$.
 If we denote the set that consists of one point from each $X_i$ by $X_0$ we obtain
 $$X=\bigcup_{x_i\in X_0}\hbox{\emph{Sym}}(X)\cdot x_i.$$
This concludes the proof of the theorem.
\end{proof}

\section{On  t-bonded Sets  of Finite Type and Infinite type}
\textbf{Definition 7.1.} A  set $X$ is said to be \emph{of the finite type} if for each  $\rho>0$ the
number  $N(\rho)$ of classes of $\rho$-clusters is finite.

\smallbreak
It is easy to see that for any uniformly discrete  set $X$ the  function $N(\rho)$
is always defined and equal to 1 for all $\rho<r$. It is not hard
to prove the following:

\smallbreak
\textbf{Statement 7.1}. \emph{If $X$ is a Delone set with $N(2R)<
\infty $, then for all $\rho>0$ the cluster counting function $N(\rho)<\infty$, i.e. $X$ is a set of the finite type.}

\begin{proof}The key reason for  this fact  is as follows. Given the  Delone set
$X$, we can construct the  Delone tiling corresponding to the
Delone set. Let us take a ball $B_x(\rho)$ centered at point $x\in
X$. Then the Delone tiles which overlap with the ball form a
\emph{pavement}  of the ball. The $\rho$-cluster $C_x(\rho)$ is
obviously a subset of all those vertices of the pavement of the
ball $B_x(\rho)$ which are located in the ball.

Now we take a point $z\in \mathbb{R}^d$ and consider the family
$\cal P$ of all possible face-to-face pavements $P$ of the ball
$B_{z}(\rho)$ by tiles with the following conditions:
\newline a) a tile of  pavement $P$  is congruent
to a  tile from the Delone tiling for the set $X$;
\newline b) the center $z$ of the ball $B_z(\rho)$ is a vertex of the pavement $P$.

We emphasize  that we do not assume  that any  pavement is
congruent
 to a fragment of the Delone tiling for $X$. On the
other hand, it is obvious that for any $x\in X$ a pavement of the
ball $B_x(\rho)$ which is a fragment of the Delone tiling for $X$
 is congruent to some pavement $P\in \cal P$. Moreover, the
cluster $C_x(\rho)\subset X$ is congruent to a set of those
vertices of the  $P$ which belong to the ball $B_z(\rho)$.

We note that if in the family $\cal P$ there are just finitely
many non-congruent pavements, then in $X$  for a given $\rho>0$
there are also finitely many non-equivalent $\rho$-clusters.  Now
we show that the family  $\cal P$  is finite.

In fact, the condition $N(2R)<\infty$ implies that in the Delone
tiling for the set $X$  there are just finitely many pairwise
non-congruent Delone tiles. It is known  that  the Delone tiling
is a face-to-face tiling. Assume  two  tiles $Q$ and $Q'$ have a
congruent  hyperface. It is easy to see that the polytope $Q$ can
be put to the polytope $Q'$ by a common hyperface only in a finite
number of ways. Due to the two conditions (finiteness of classes
of Delone tiles for $X$ and finiteness of non-congruent pairs
$(Q,Q')$ of tiles adjacent on a common hyperface),  only finitely
many face-to-face pavements $P$ of the ball $B_z(\rho)$ with the
above-mentioned properties a) and b) can be  constructed. It
follows from this that $N(\rho)<\infty$.\endproof
\end{proof}

\smallbreak
 Now we return to a more general  case when $X$ is a $t$-bonded set.
 The following theorem shows that  the case of $t$-bonded sets
 differs from the case of Delone sets.

\smallbreak
\begin{theorem}
Given $t>0$, for an arbitrarily large  $k>0$ there is a $t$-bonded set
$X\subset \mathbb{R}^2$ such that $N(kt)<\infty$, but  for any
$\varepsilon>0$ $N(kt+\varepsilon)=\infty$. Thus the above mentioned $t$-bonded set $X$  is not a set of the finite
type.
\end{theorem}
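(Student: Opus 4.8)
The plan is to realize $X$ as the vertex set of a single infinite polygonal path $\Gamma=(p_0,p_1,p_2,\dots)$ all of whose links have length exactly $t$. Then $X$ is $t$-bonded for free: the sub-path between any two vertices is a $t$-chain, so condition (2) of Definition 2.1 holds, and one only has to arrange \emph{aff}$X=\mathbb{R}^{2}$ and uniform discreteness. Thus the whole content of the theorem is forced into the \emph{geometry} of $\Gamma$: I must fold $\Gamma$ so that the $kt$-clusters split into finitely many congruence classes while infinitely many new classes are born the instant the radius passes $kt$.

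The device producing the new classes will be a sequence of \emph{close self-approaches}. I would make $\Gamma$ fold back on itself so that, for each $j$, two long straight runs lie antiparallel at distance $\rho_j$, where $(\rho_j)$ is a strictly decreasing sequence of pairwise distinct numbers with $\rho_j>kt$ and $\rho_j\to kt$. If $c_j$ is a vertex in the middle of one such run and $P_j$ the nearest vertex of the opposite run, then $|c_jP_j|=\rho_j$, so $P_j\notin C_{c_j}(kt)$ but $P_j\in C_{c_j}(kt+\varepsilon)$ once $\rho_j\le kt+\varepsilon$. Because $\rho_j\to kt$, for every $\varepsilon>0$ infinitely many indices satisfy $\rho_j<kt+\varepsilon$; for each of them $C_{c_j}(kt+\varepsilon)$ is the same straight segment of equally spaced points together with one extra vertex at the distinct distance $\rho_j$ (which, by taking $\rho_j\notin t\mathbb{Z}$, is never one of the intrinsic distances $t,2t,\dots$). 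As $\rho_j$ is an isometry invariant, these clusters are pairwise non-congruent, giving $N(kt+\varepsilon)=\infty$.

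The crux is to keep $N(kt)<\infty$ even though the $\rho_j$ form an infinite set of distinct reals. The key idea is to assemble $\Gamma$ only from steps drawn from a \emph{fixed finite set} $D$ of vectors, each of length $t$, chosen so that the integer combinations of the members of $D$ are dense in $\mathbb{R}^{2}$; a concrete choice is the two axis directions together with the $60^{\circ}$ vector $(\tfrac12,\tfrac{\sqrt3}{2})t$, whose irrational height lets one tune the separation of two axis-parallel runs to any value of a dense set while keeping their horizontal offset an integer multiple of $t$. The point of this restriction is that a continuously varying separation is then realized purely by varying the \emph{lengths} of straight runs, never by introducing new turning angles: $\Gamma$ only ever turns through one of the finitely many angles between members of $D$. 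If in addition I keep every run longer than $2kt$, keep successive turns more than $2kt$ apart along $\Gamma$, place the closest approach of each fold in the interior of two long runs, and lay the successive fold-gadgets on a coarse grid of spacing $\gg kt$ so that $\Gamma$ is simple and distinct gadgets never come within $kt$ of one another, then a ball of radius $kt$ about any vertex meets $\Gamma$ in a single arc carrying at most one turn (the opposite strand of a fold, at distance $\ge\rho_j>kt$, is excluded). Hence every $kt$-cluster is congruent to a straight segment of $2k+1$ equally spaced points or to one of finitely many one-turn or endpoint configurations, so $N(kt)<\infty$.

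Concretely I would proceed in the order: (1) fix $D$ and verify the density and alignment property; (2) describe the generic fold gadget that joins a run to an antiparallel run at a prescribed separation $\rho$, noting that its runs and turns do not depend on $\rho$, only the run-lengths do; (3) concatenate these gadgets for the chosen $\rho_j\downarrow kt$, separated by long transit runs and spread over the plane; (4) check \emph{aff}$X=\mathbb{R}^{2}$ and uniform discreteness (all links equal $t$, and the only short non-consecutive approaches are the controlled ones at distance $\ge\rho_j>kt$), so that $X$ is a $t$-bonded set; (5) prove $N(kt)<\infty$ by the cluster classification above; (6) prove $N(kt+\varepsilon)=\infty$ through the clusters $C_{c_j}(kt+\varepsilon)$. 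The main obstacle is step (5) together with the design in (2)--(3): I must be sure that encoding infinitely many distinct separations $\rho_j$ does not smuggle any $\rho_j$-dependent feature into a radius-$kt$ cluster. This is exactly what the finite direction set buys — without it, bridging a continuously varying gap would force a continuously varying link length or turning angle somewhere, which a radius-$kt$ ball would detect and which would by itself make $N(kt)$ infinite; routing everything through finitely many fixed directions hides the parameter $\rho_j$ inside run-lengths that are invisible at radius $kt$. The one remaining technical check is that $\Gamma$ has no \emph{unintended} self-approach closer than $kt$, which the coarse-grid layout guarantees.
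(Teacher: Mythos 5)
Your plan is workable and correct in outline, but it is a genuinely different construction from the paper's. The paper takes two horizontal zigzag rows, one along $v=0$ with period $a$ and one along $v=kt$ with period $b$, where $t<a,b<2t$ and $b/a$ is irrational, joined by one vertical zigzag strut; $N(kt)<\infty$ is then nearly immediate (two periodic rows plus a finite exceptional zone near the strut), and $N(kt+\varepsilon)=\infty$ comes from number theory: Dirichlet's theorem applied to the convergents $p_n/q_n$ of $b/a$ yields infinitely many pairs with $|p_na-q_nb|<\delta(\varepsilon)$, so $y_{q_n}\in C_{x_{p_n}}(kt+\varepsilon)$, and pairwise non-equivalence is proved by pinning down the only two candidate isometries (a translation, or a translation followed by a reflection, forced by the two distance-$a$ neighbours of the centre) and showing either would make $b/a$ rational. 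You instead fold a single equal-link path so that the critical radius is approached from above by explicitly chosen self-approach distances $\rho_j\downarrow kt$, and you separate the clusters $C_{c_j}(kt+\varepsilon)$ by the numerical invariant $\rho_j$ (the least centre-to-point distance that is not a multiple of $t$), which trivializes the non-equivalence step; the cost is that all the difficulty migrates into the construction and into $N(kt)<\infty$, which you control via the finite direction set, long runs, separated turns and a coarse layout. The irrationality input is the same in spirit: your dense set of realizable separations $(\ell+n\sqrt3)\,t$ plays exactly the role of the paper's irrational $b/a$. Before this counts as a proof you must actually verify two asserted points: (i) each $\rho_j$ is realizable \emph{exactly} with a vertex of the opposite run orthogonally opposite $c_j$ (this forces the horizontal offset of the two runs into $t\mathbb{Z}$, i.e.\ an even number of $60^{\circ}$ steps, which still leaves the dense set $(\ell+n\sqrt3)\,t$, but it needs saying); and (ii) the connectors, which require more and more steps as $\rho_j\to kt$, can always be routed without creating an uncontrolled approach at distance $\le kt$ --- achievable with your coarse-grid layout, but this is where the real work of your version lies.
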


\begin{proof}
We will construct an example of such set $X$.

Let us  take two positive numbers $a$ and $b$ such that  $a/b$ is
irrational and  $t< a, b< 2t$, where $t>0$ is a given parameter
for $X$. Assume that $L_1:$ $v=0$ and $L_2:$ $v=kt$ are  two horizontal lines in the plane $(u,v)$.

Along each of these lines we construct a "horizontal" broken line
whose vertices will be a subset of the  set $X$.

The  set $X_1$ of vertices of the first broken line is determined  by
the formulas:

\noindent $ x_i=(ia/2,0)$, if  $i$ is even and

\noindent $ x_i= (ia/2,-\theta_1)$ if $i$ is odd, $\theta_1>0,$
$i\in Z$.

\smallbreak
 The altitude $\theta _1$ in an equilateral triangle $\triangle x_{i-1}x_{i}x_{i+1}$ is chosen so
that side-lengths $|x_0x_1|$, $|x_1x_2|$, ...  of the broken
line are equal to $t$.

\smallbreak
The set $X_2$ of  vertices  of the second horizontal broken line
along the line $L_2$ is determined  by
the formulas:

\noindent $ y_i=(ib/2, kt)$, if  $i$ is even and

\noindent $ y_i= (ib/2 ,kt+\theta_2 )$ if $i$ is odd,
$\theta_2>0$, $i\in Z$.

\smallbreak The altitude $\theta _2$ in the equilateral
triangle $\triangle y_{i-1}y_{i}y_{i+1}$ is chosen so that sides
$|y_0y_1|$, $|y_1y_2|$, ... of the broken line are equal to $t$.

Note that all points of $X_1$ with even indices are on the line
$L_1$, and all points of $X_1$ with odd indices are below this
line.

Similarly   all points of $X_2$ with even indices are on the
line $L_2$, and all points of $X_2$ with odd indices are above  this
line.

The distance between $(0,0)$ and $(0,kt)$ is equal to $kt$. At
the same time, the distance between any other pairs of points
from $X_1$, and $X_2$ (but the pair  $(0,0)$ and $(0,kt)$) is greater than $kt$.
It is clear that for the pair $x_i \in X_1$ and $y_j \in X_2$ the distance is greater
than  $kt$ provided either $i$ or $j$ is odd. Assume that $i$ and $j$ are both even. If
$|x_iy_j|=kt$, then the interval that connects $x_i$ and $y_j$ is parallel to the interval
that connects $(0,0$ and $(0,kt)$, which  in turn implies that $ia/2=jb/2$, and  $b/a$ is rational.
However,  by  choice of $b$ and $a$ the ratio $b/a$ is irrational.

To complete the construction of the set $X$ we  add to the sets
$X_1$ and $X_2$ the third set $X_3$ which is described below.

Let $X_3$ be a set of vertices of a broken line along the interval
that connects $(0,0)$ to $(0,kt)$. Below  we explain how this broken
line is constructed.

Let $c>0$ be such that $t\leq c<2t$ and $c$ divides $kt$:
$\frac{kt}{c}=n\in Z$. Construct the following   broken line with
vertices $z_0, z_1, z_{2n}$ and links   all equal to $t$.

\noindent  $z_i=(0, c i/2)$ if $i$ is even, $z_0=x_0$   and
$z_{2n}=y_0$;

\noindent $z_i=(\theta_3, ci/2)$ if $i$ is odd.

Here  the altitude $\theta_3>0$ in the equilateral triangle
$z_{i-1}z_iz_{i+1}$ is chosen so that the lateral sides
$z_{i-1}z_i$ and $z_iz_{i+1}$ are both equal to $t$.

With the construction of  $X_3$ we completed constructing  the
$t$-bonded set $X$ which is defined as  $X=X_1\cup X_2\cup X_3$.

\vskip -0.3cm

\begin{center}
\begin{figure}[!ht]
\hskip 2cm{ \hbox{\includegraphics[width=8cm]{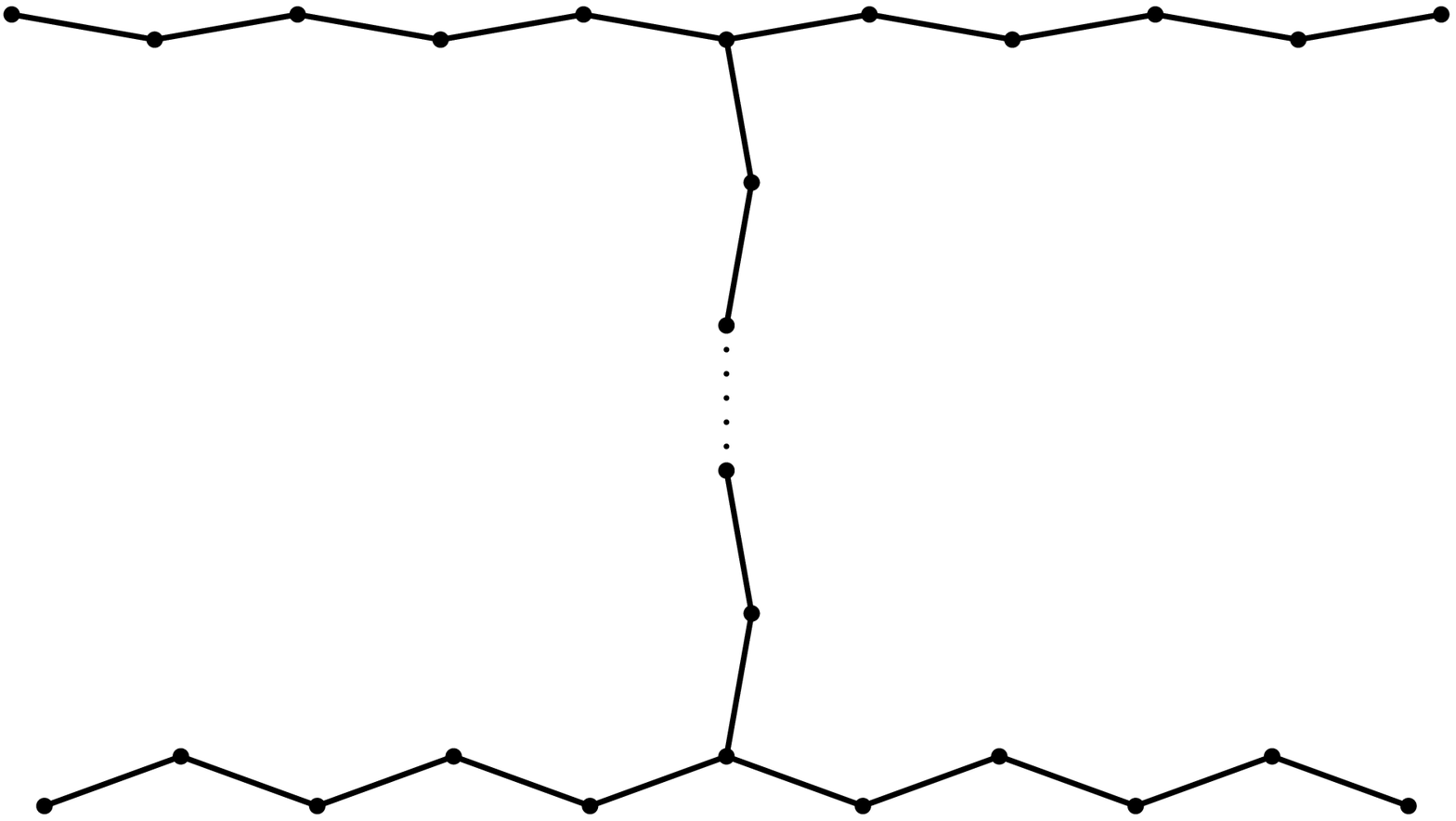}
\raise4.0cm\llap{$y_0$\kern4.2cm}
\raise4.6cm\llap{$y_1$\kern3.1cm}
\raise4.0cm\llap{$y_2$\kern2.5cm}
\raise0.9cm\llap{$z_1$\kern4.0cm}
\raise3.5cm\llap{$z_{2n-1}$\kern3.6cm}
\raise-0.1cm\llap{$x_0$\kern4.6cm}
\raise-0.2cm\llap{$x_1$\kern3.8cm}
\raise0.5cm\llap{$x_2$\kern3.3cm} \raise-1.3cm\llap{Fig. 1. Point
set $X=X_1\cup X_2\cup X_3$ \kern2.2cm} }}
\end{figure}
\end{center}

\vskip -0.5cm

 The role of $X_3$ is to  make the entire set
$X$ $t$-connected. We emphasize that by  construction of the set
$X$, any $t$-cluster in the set $X$ has  rank 2, i.e. the
affine hull with dimension 2.

It is not hard to prove  that $N(kt)<\infty$. In fact, all clusters
$C_{x_i}(kt)$, $x_i\in X_1$ are equivalent if $\frac{|i|}{2}a>kt
+\theta_3$ that holds when $|i|>\frac{2(kt+\theta_3)}{a}$.

Analogously, for points $y_j\in X_2$ all clusters $C_{y_j}(kt)$
are equivalent if $|j|>\frac{2(kt+\theta_3)}{b}$.

Besides these two classes of $kt$-clusters in $X$,  there is a
finite amount of $kt$-clusters centered at points $x_i$ and $y_j$
with $|i|\leq \frac{2(kt+\theta_3)}{a}$ and $|j|\leq
\frac{2(kt+\theta_3)}{b}$ respectively, and there is a finite
amount of $kt$-clusters centered at points of  $X_3$. Therefore
$N(kt)<\infty$.

Throughout the rest of the proof we will consider clusters
centered at points $x_i\in X_1$ and $y_j\in X_2$
 only with even indices $i$ and $j$. To be consistent with all the notations let us
redesignate the points $x_i$ with even indices $i$ with new natural indices $p$,
 where $p=i/2$, $x_i=x_p$. A  similar change  is  done for the points  $y_j$ where $j$ is even, $j\mapsto q$,
 where $q=j/2$. Then  $x_p=(pa,0)$ and  $y_q=(qb,kt)$,
 where $p$ and $q$ are positive integer numbers.

Let us  take $\varepsilon>0$ so small that the ball
$B_{x_p}:=B_{x_p}(kt +\varepsilon)$  and $L_2$ intersect over a
chord with the the length $2\delta (\varepsilon) $ where
$\delta<b/4$. Since $\delta<b/4$,   $|B_{x_{p}}(kt +\varepsilon)\cap X_2|\leq 1$
for any $p\in \mathbb{N}$.

Let us consider a set $Q$ of all pairs $(p,q)\in \mathbb{N}^2$
such that $B_{x_p}\cap L_2\ni y_q$, i.e the point $y_q$ belongs to
the cluster $C_{x_p}(kt +\varepsilon)$-cluster centered in the
point $x_p$. This is equivalent to the inequality
 $|pa-qb|<\delta$, or to the inequality
$$|\alpha-\frac{p}{q}|< \frac{\delta/a}{q},
\hbox{ where } \alpha=\frac{b}{a} \hbox{ is irrational}. \eqno(7.1)
$$

\smallbreak By the Dirichlet  theorem, the  inequality
$$|\alpha-\frac{p}{q}|< \frac{1}{q^2}  \eqno(7.2)$$
has infinitely many solutions in positive integers $(p,q)$.

Moreover,  let $\alpha=[a_0;a_1,\ldots,a_n,\ldots]$  be the
continued fraction of $\alpha$ and
$\frac{p_n}{q_n}=[a_0;a_1,\ldots,a_n]$ the $n$-th convergent. The
sequence of the convergents determines the following two sequences
of positive integer numbers
$$q_1<q_2<q_3<\ldots \hbox{ and } p_1<p_2<p_3<\ldots \eqno(7.3)$$
such that
$$|\alpha-\frac{p_n}{q_n}|< \frac{1}{q_nq_{n+1}}<\frac{1}{q_n^2} \eqno(7.4)  $$
 Due to (7.4)  for all large enough  $n$ such that $q_{n+1}>1/\delta$ we have
$$ |\alpha -\frac{p_n}{q_n}|<\frac{\delta}{q_n}. \eqno(7.5)
$$
Now for all these values of $n$ we take the following  pairs of points
 $(x_{p_n},y_{q_n})$ where  $x_{p_n}=(p_na, 0)\in X_1$ and
$y_{q_n}=(q_nb, kt)\in X_2$. Due to (7.5) the following inequality
holds
 $$|q_nb-p_na|<q_n\frac{\delta}{q_n}\leq\delta. \eqno(7.6)$$

 \smallbreak By choice of $\delta=\delta(\varepsilon)$ it follows from (7.6) that $y_{q_n}$ belongs to the cluster
 $C_{x_{p_n}}(kt+\varepsilon)$. Therefore,  $(p_n,q_n)\in Q$
 for all $n>N_0(\delta)$.

Let  $(p,q)$ and $(p', q')$ be two pairs of indices   and
$x_p=(pa,0)$, $y_q=(qb,kt)$ and $x_{p'}=(p'a,0)$,
$y_{q'}=(q'b,kt)$ pairs of corresponding points. If the pairs
$(p,q)$ and $(p'q')$ both belong to $Q$, then $y_q \in C_{x_p}(kt
+\varepsilon)$ and $y_{q'} \in C_{x_{p'}}(kt +\varepsilon)$. We
show  that for the clusters $C_{x_p}(kt +\varepsilon)$ and
$C_{x_{p'}}(kt +\varepsilon)$ there are at most two potentially
possible isometries of the plane that establish equivalence of
these clusters.

Since $kt +\varepsilon\geq 2t+\varepsilon>a$,
 points $x_{p-1}=((p-1)a,0)$ and $x_{p+1}=((p+1)a,0)$ also belong to the cluster $C_{x_{p}}(kt +\varepsilon)$.
 We can even say that these two points are the only two points that belong to the
cluster at the distance $a$ from the center of the cluster
$C_{x_p}(kt+\varepsilon)$, if $x_p\neq x_0$. The same observation
is true regarding the cluster $C_{x_{p'}}(kt +\varepsilon)$,
centered in the point $x_{p'}$. It means that any isometry of
the plane that establishes equivalence of the clusters
$C_{x_{p}}(kt +\varepsilon)$ and $C_{x_{p'}}(kt +\varepsilon)$,
should map the center $x_{p}$ onto the $x_{p'}$, and  points
$x_{p-1}$ and $x_{p+1}$ onto points $x_{p'-1}$ and $x_{p'+1}$
(though we don't claim  that the order of these two points should
be preserved). There could be only two isometries of this type. The
first one is a parallel shift when $x_{p-1}$ maps onto
$x_{p'-1}$, and $x_{p+1}$ maps onto $x_{p'+1}$(the isometry
preserves the order of the points $x_{p-1}$  and $x_{p+1}$). The
second one, when $x_{p-1}$ maps onto $x_{p'+1}$, and $x_{p+1}$
maps onto  $x_{p'-1}$, is a parallel shift followed by   the
reflection around the line $u=ap'$.

\smallbreak
Let us  prove now that $C_{x_p}(kt +\varepsilon)$ and
$C_{x_{p'}}(kt +\varepsilon)$ are not equivalent if $(p,q)$ and
$(p'q')\in Q$. Note,  that $y_q \in C_{x_p}(kt +\varepsilon)$ and
$y_{q'} \in C_{x_{p'}}(kt +\varepsilon)$. We will prove that neither
a parallel shift, nor  a parallel shift followed by
reflection around line $u=p'a$, can establish equivalence of the
clusters $C_{x_{p}}(kt +\varepsilon)$ and $C_{x_{p'}}(kt
+\varepsilon)$.

 \smallbreak Assume that for some real number l (positive or negative)$bq =ap-l$. In case of  parallel  shift
  $bq' =ap'-l$,  and
 $bq-bq' =ap-ap'$, hence, $qb - q'b = pa- p'a$, i.e. $b/a=(p-p')/(q-q')$. If the isometry is the shift and the reflection,
 as described above, then
 $bq' =ap'+l$, hence $qb + q'b = pa+ p'a$, or $b/a=(p+p')/(q+q')$. In either case $b/a$ is a rational number
  that contradicts the choice of $a$ and $b$.

 \smallbreak
Therefore, $(kt +\varepsilon)$-clusters $C_{x_p}(kt +\varepsilon)$
and $C_{x_p'}(kt +\varepsilon)$, are not equivalent for any
different $p$ and $p'$ from the infinite sequence
$p_1<p_2<p_3<\ldots $. Hence, the set X is a $t$-bonded set of infinite
type. \end{proof}

\smallbreak
\section {Summary}
In the paper we  developed the local theory of regular and
multi-regular $t$-bonded sets.  The significance of this theory is
that the terms of $t$-bonded sets seems to be  more appropriate for
describing the chemical bonds existing  between atoms in real
structures.  In many respects this theory follows
in the tracks of the local theory of regular Delone systems. However,
the  $t$-bonded sets essentially extend the limits of the family of
Delone sets,  and therefore it is no surprise that    in spite of the
similarity  of the theories,  there are essentially new features in the
behavior of the $t$-bonded sets that are not the  Delone sets.

From our point of view the studies of $t$-bonded sets should be continued in two
directions. First,  to get the upper bound for the radius
$\rho_0$ of a cluster such that the  condition  $N(\rho_0)=1$
implies regularity of a $t$-bonded set in the 3D space. Second, regarding potential application  of the   theory,  it makes sense  to extend
the above mentioned theory of regular sets for clusters defined
by other metrics.

\section*{References}.

\medskip

\noindent [1] N.P.~Dolbilin, On Local Properties for Discrete Regular Systems,
Uspekhi Matem. Nauk, 1976, v.230, N.3,  516–519 (in Russian).

\noindent [2] B.~Delaunay, Sur la sph\`{e}re vide. A la
m\'{e}moire de Georges Vorono\"{\i}. Bulletin de l'Acad\'{e}mie
des Sciences de l'URSS. 1934, Issue 6,  Pages 793–800.

\noindent [3] B.N.~Delone, Geometry of positive quadratic forms,
Uspekhi Matem. Nauk, 1937, 3,  16–62 (in Russian).

\noindent [4] A.~Schoenflies, Kristallsysteme und
Kristallstruktur, Leipzig,  1891 - Druck und verlag von BG Teubner

\noindent [5] L.~Bieberbach, \"Uber die Bewegungsgruppen des
n-dimensionalen Euklidischen R\"aumes I, Math. Ann. 70 (1911),
207-336; II, Math. Ann. 72 (1912), 400-413.

\noindent [6] E.S.~Fedorov, Elements of the Study of Figures,
Zap. Mineral. Imper. S.Peterburgskogo Obschestva, 21(2), 1985,
1-279.

\noindent [7] B.N.~Delone, N.P.~Dolbilin, M.I.~Stogrin,
R.V.~Galiuilin, A local criterion for regularity of a system of
points. Soviet Math. Dokl., 17, 1976, 319-–322.

\noindent [8] N.P.~Dolbilin, M.I.~Shtogrin, A local criterion for
a crystal structure, Abstracts of the IXth All-Union Geometrical
Conference, Kishinev, 1988, p. 99 (in Russian).

\noindent [9] N.P.~Dolbilin, The extension theorem, Discrete
Math., 221:1-3, Selected papers in honor of Ludwig Danzer (2000),
43--59.

\noindent [10] M.I.~Stogrin,   On the upper bound for the
order of axis of a star in a locally regular Delone set.
Geometry, Topology, Algebra and Number Theory, Applications.
The International Conference dedicated to the 120-th anniversary
of Boris Nikolaevich Delone (1890-1980) (Moscow, August 16-20, 2010),
Abstracts, Steklov Mathematical Institute, Moscow, 2010, 168-169 (in Russian).

\noindent [11] Mikhail M. ~Bouniaev, Nikolay P. ~Dolbilin, Oleg R.~Musin, Alexey S.~Tarasov,
Geometrical Problems  Related to Crystals, Fullerence, and Nanoparticles Structure.
In Forging Connections Between Computational Mathematics  and Computational Geometry (Ke Chen, editor),
Springer International Publisher in Mathematics and Statistics, 2016, pp 139-152.

\noindent [12] Mikhail M. ~Bouniaev, Nikolay P. ~Dolbilin, Oleg R.~Musin, Alexey S.~Tarasov,
Two Groups of  Geometrical Problems Related to Study of Fullerenes and  Crystals.
Journal of Mathematics, Statistics and Operations Research(JMSOR) Vol.2 No.2(Print ISSN: 2251-3388,
E-periodical: 2251-3396) pp.18-28

\noindent [13] Mikhail M. ~Bouniaev, Nikolay P. ~Dolbilin, Local Theory of Crystals: Development and Current Status.
 Proceedings, 4-th Annual International Conference  on Computational Mathematics, Computational Geometry and Statistics,
 Singapore,  26th-27thJanuary, 2015, pp. 39-45.

\noindent [14] N.P.~Dolbilin, J.C.~Lagarias, M.~Senechal,
Multiregular point systems. Discr. and Comput. Geometry, 20, 1998,
477–498.

\noindent [15] N.P.~Dolbilin, A Criterion for   crystal and locally
antipodal Delone sets. Vestnik Chelyabinskogo Gos. Universiteta,
2015, 3 (358), 6-17 (in Russian).

\noindent [16] N.P.~Dolbilin, A.N.~Magazinov, Locally antipodal
Delauney Sets, Russian Math. Surveys.70:5 (2015), 958-960.

\noindent [17] N.P.~Dolbilin, A.N.Magazinov, The Uniqueness
Theorem for Locally Antipodal Delone Sets, ``Modern Problems of
Mathematics, Mechanics and Mathematical Physics'',  II, Collected
papers, Steklov Institute Proceedings, 294, MAIK, M., 2016 (in
print).


\end{document}